\let\oldbibliography\thebibliography
\renewcommand{\thebibliography}[1]{\oldbibliography{#1}\setlength{\itemsep}{0pt}}
\newtheorem{theorem}{Theorem}[section]
\newtheorem{lemma}[theorem]{Lemma} 
\newtheorem{proposition}[theorem]{Proposition}
\begin{document}

\title{\textbf{The direct moving sphere for fractional Laplace equation}}

\author {Congming Li\thanks{Email: congming.li@sjtu.edu.cn. School of Mathematical Sciences, Shanghai Jiao Tong University, Shanghai 200240, China. Partially supported by NSFC-12031012, NSFC-12250710674 and the Institute of Modern Analysis-A Frontier Research Center of Shanghai.}  
\hspace{.2in} Meiqing Xu\thanks{Email: xmq157@sjtu.edu.cn. School of Mathematical Sciences, 
Shanghai Jiao Tong University, Shanghai 200240, China. Partially supported by NSFC-12031012, NSFC-12250710674 and the Institute of Modern Analysis-A Frontier Research Center of Shanghai.}
\hspace{.2in}Hui Yang\thanks{Email: hui-yang@sjtu.edu.cn. School of Mathematical Sciences, 
Shanghai Jiao Tong University, Shanghai 200240, China. Partially supported by NSFC-12301140 and the Institute of Modern Analysis-A Frontier Research Center of Shanghai.} 
\hspace{.2in} Ran Zhuo\thanks{E-mail: zhuoran1986@126.com. Department of Mathematics, Shanghai Normal University, Shanghai 200234, China. Partially supported by the Institute of Modern Analysis-A Frontier Research Center of Shanghai.} }

\maketitle
\begin{abstract}
\noindent  
This paper works on the direct method of moving spheres and establishes a Liouville-type theorem for the fractional elliptic equation
\[
 (-\Delta)^{\alpha/2} u =f(u)  ~~~~~~  \text{in } \mathbb{R}^{n}
\]
with general non-linearity. One of the key improvement over the previous work is that we do not require the usual Lipschitz condition. In fact, we only assume the structural condition that $f(t) t^{- \frac{n+\alpha}{n-\alpha}}$ is monotonically decreasing. This differs from the usual approach such as Chen-Li-Li (Adv. Math. 2017), which needed the Lipschitz condition on $f$, or Chen-Li-Zhang (J. Funct. Anal. 2017), which relied on both the structural condition and the monotonicity of $f$. We also use the direct moving spheres method to give an alternative proof for the Liouville-type theorem of the fractional Lane-Emden equation in a half space. Similarly, our proof does not depend on the integral representation of solutions compared to existing ones. The methods developed here should also apply to problems involving more general non-local operators, especially if no equivalent integral equations exist. 

\medskip 

\noindent {\bf{Keywords}}: Fractional Laplacian, direct method of moving spheres, Liouville-type theorem. 
 
\medskip

\noindent {\bf {MSC 2020}}:
    35R11; %Fractional partial differential equations
    35B53. %Liouville theorems and PhragmeÌn-LindeloÌf theorems in context of PDEs
\end{abstract}

%\tableofcontents
\section{Introduction} 
The fractional Laplacian has been extensively used in the study of physics, biology, mechanics and economics.
The operator is the infinitesimal generator of a stable L\'{e}vy process--a jump process (\cite{B}). In  L\'{e}vy flight diffusion process, the fractional Laplacian can be used to characterize memory effects and long-distance diffusion processes (\cite{Ca,MJ}). The operator is used in the biological population dynamics model--Fisher-KPP equations to accurate localization of solutions (\cite{CR,FY,SV}) and to model the dynamics in the Hamiltonian chaos in astrophysics(\cite{BG,Za}). It also has various applications in probability and finance (\cite{A,Da,Ke}).

The fractional Laplacian is a non-local integro-differential operator taking the following form:
\begin{equation}
(-\Delta)^{\alpha/2} u(x) = C_{n,\alpha} \, P.V. \int_{\mathbb{R}^n}
\frac{u(x)-u(z)}{|x-z|^{n+\alpha}} dz,\label{Ad7}
\end{equation}
where $0< \alpha<2$ and P.V. stands for the Cauchy principal value. Define
$$\mathcal{L}_{\alpha}=\left\{u: \mathbb{R}^n\rightarrow \mathbb{R}  \Big|  \int_{\mathbb{R}^n}\frac{|u(x)|}{1+|x|^{n+\alpha}} \, d x <\infty \right\}.$$
Then the operator $(-\Delta)^{\alpha/2} u $ is well-defined for the functions $u\in \mathcal{L}_{\alpha} \cap C^{1,1}$.

The method of moving planes is one of the most important methods to establish Liouville-type theorems of elliptic equations, which was created by Alexandroff in the study of embedded constant mean curvature surfaces and then developed by Serrin \cite{Ser}, Gidas-Ni-Nirenberg \cite{GNN}, Caffarelli-Gidas-Spruck \cite{CGS}, Chen-Li \cite{CL1}, and C. Li \cite{Li}, among others. Its variant, the method of moving spheres, was also used and developed by Chen-Li \cite{CL3}, Li-Zhang \cite{LyZh}, Li-Zhu \cite{LyZ}, etc. Due to the nonlocal nature of the fractional Laplacian, it is difficult to apply the moving planes/spheres method to fractional order equations. As far as we know, there are three methods to carry out the moving planes/spheres for such nonlocal problems: extension method, integral equation method, and direct method. 

The extension method was based on the work of Caffarelli-Silvestre \cite{CaSi}. After being extended to one more dimension, a fractional order equation becomes a local problem. Then one can use the moving planes/spheres method to this local problem. The extension method has been successfully used to study fractional order equations, see, for example, Chen-Zhu \cite{CZ}, Fall-Weth \cite{FW1}, Jin-Li-Xiong \cite{JLX} and the references therein.

The moving planes method in integral forms was introduced by Chen, Li and Ou \cite{CLO3}, and the corresponding moving spheres method was developed by Li \cite{Ly}. By establishing the equivalence between fractional order equations and corresponding integral equations, one only needs to apply the moving planes/spheres method for the integral equations. The integral equation method can be seen in \cite{CDQ,CLO1,CL6,Y,ZLy,ZCCY} and the references therein.     

When using the extension method and the integral equation method, one sometimes needs to impose additional conditions on the solutions. Moreover, for uniformly elliptic nonlocal operators or fully nonlinear nonlocal operators (e.g., fractional $p$-Laplacian), it seems that neither the extension method nor the integral equation method works. This motivates Chen-Li-Li \cite{CLL} to come up with the direct method of moving planes for fractional Laplacian, where they established some interesting maximum principles for antisymmetric functions (see also Jin-Xiong \cite{JX} and Jarohs-Weth \cite{JW}). This direct method can also be applied to many nonlocal problems, see, for example, \cite{CHL,CL2,CLL1,LZ1,ZL} and the references therein. Later, Chen, Li and Zhang \cite{CLZ} developed the direct method of moving spheres in the fractional setting. They obtained a narrow region principle for spherically anti-symmetric functions and showed a Liouville-type theorem for the following fractional Laplacian equation
\begin{equation}\label{In-3}
  (-\Delta)^{\alpha/2} u(x)=f(u(x)),\,\,\,\, x\in \mathbb{R}^{n}.
\end{equation}  

However, the direct method of moving planes/spheres developed in the above articles under the non-local framework requires the assumption that $f$ is Lipschitz continuous or monotonically increasing. For instance, the method in Chen-Li-Li \cite{CLL} requires $f$ to be Lipschitz continuous, and the proof of Chen-Li-Zhang \cite[Theorem 2]{CLZ} essentially uses the monotonicity of the non-linearity $f$ (see \cite[Lemma A.2]{CLZ}). 
Moreover, the key estimates in both Chen-Li-Li \cite[Lemma A.2]{CLL} and Chen-Li-Zhang \cite[Lemma A.2]{CLZ} are obtained by the equivalent integral equation of \eqref{In-3}, and so an integrability condition on $f$ is also required to ensure the equivalence of \eqref{In-3} with its integral equation. In this paper, we will further develop the direct method of moving spheres to the fractional Laplacian equation \eqref{In-3} without relying on its integral equation. Our main idea is to construct elaborate anti-symmetric auxiliary functions such that the maximum principles of spherically anti-symmetric functions apply to a partial region rather than the whole outer region when moving spheres. Thus, the Lipschitz regularity and monotonicity of $f$ can be removed. We believe that this kind of idea and the method of constructing anti-symmetric functions can also be applied to a variety of problems involving nonlinear non-local operators and more general non-linearities.

We first prove the following pointwise estimates of $(-\Delta)^{\alpha/2} w$ at the minimum points for spherically anti-symmetric functions, which are inspired by Cheng-Huang-Li \cite{CHL}.   

\begin{theorem}[Maximum principle for spherically anti-symmetric functions]\label{MP 1}
 Let $w \in \mathcal{L}_\alpha$ be a spherically anti-symmetric function, i.e.,
 \begin{equation*}
      w(x)=-\frac{1}{|x|^{n-\alpha}} w\left(\frac{x}{|x|^2}\right), \quad \forall|x| \geq 1 \quad \text{(implying that $w(x)=0$ for $|x|=1$)}.
 \end{equation*}
 Denote $d(x)=\operatorname{dist}\left(x, \partial B_1(0)\right)$. 
\begin{itemize}
      \item[(1)] Suppose that there exists $|\bar{x}|>1$ satisfying 
\begin{equation*}
w(\bar{x})=\inf _{|x| \geq 1} w(x) \leq  0, 
\end{equation*}
and $w$ is $C^{1,1}_{loc}$ at $\bar{x}$. Then
\begin{equation*}
\begin{aligned}
(-\Delta)^{\alpha/2} w(\bar{x}) \leq C \Bigg(& \frac{ w(\bar{x})}{(d(\bar{x}))^\alpha(1+d(\bar{x}))^n} \\
& +\int_{|y|>1}\left(w(\Bar{x})-w(y)\right) \bigg( \frac{1}{|\Bar{x}-y|^{n+\alpha }}-\frac{1}{\left|\frac{\bar{x}}{|\Bar{x}|}-|\Bar{x}|y\right|^{n+\alpha}}\bigg)dy \Bigg). 
\end{aligned}
\end{equation*}
where $C=C(n, \alpha) >0$ is a constant. In particular, we have 
\begin{equation*}
    (-\Delta)^{\alpha/2} w(\bar{x}) \leq -C  \int_{|y|>1}w(y) \bigg( \frac{1}{|\Bar{x}-y|^{n+\alpha}}-\frac{1}{\left|\frac{\bar{x}}{|\Bar{x}|}-|\Bar{x}|y\right|^{n+\alpha}} \bigg) dy. 
\end{equation*}
\item[(2)] Suppose that there exists $0<|\bar{x}|<1$ satisfying 
\begin{equation*}
w(\bar{x})=\inf _{0<|x|<1} w(x) \leq  0, 
\end{equation*}
and $w$ is $C^{1,1}_{loc}$ at $\bar{x}$. Then
\begin{equation*}
\begin{aligned}
(-\Delta)^{\alpha/2} w(\bar{x}) \leq C \Bigg( & \frac{ w(\bar{x})}{(d(\bar{x}))^\alpha} \\
& + \int_{|y|<1}\left(w(\Bar{x})-w(y)\right) \bigg( \frac{1}{|\Bar{x}-y|^{n+\alpha }}-\frac{1}{\left|\frac{\bar{x}}{|\Bar{x}|}-|\Bar{x}|y\right|^{n+\alpha}}\bigg)dy \Bigg). 
\end{aligned}
\end{equation*}
where $C=C(n, \alpha) >0$ is a constant. 
 \end{itemize}
\end{theorem}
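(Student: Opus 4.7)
The plan is to split the defining principal-value integral at the unit sphere and then use the spherical anti-symmetry to fold the two pieces together via a Kelvin inversion substitution. After the substitution, the bulk term has the desired factorized form, while a geometric lower bound on a residual integral produces the coercive boundary term $w(\bar x)/(d(\bar x))^\alpha(1+d(\bar x))^n$.

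For part (1), I would first write
\begin{equation*}
\frac{(-\Delta)^{\alpha/2}w(\bar x)}{C_{n,\alpha}} = \mathrm{P.V.}\!\int_{|y|>1}\!\frac{w(\bar x)-w(y)}{|\bar x - y|^{n+\alpha}}\,dy + \int_{|y|<1}\frac{w(\bar x)-w(y)}{|\bar x - y|^{n+\alpha}}\,dy,
\end{equation*}
and apply the inversion $y=z/|z|^2$ to the inner piece, which maps $\{|y|<1\}$ onto $\{|z|>1\}$ with Jacobian $|z|^{-2n}$. Two basic facts are needed: the Kelvin identity $|\bar x-z/|z|^2|=|z|^{-1}\bigl||\bar x|z-\bar x/|\bar x|\bigr|$, and the anti-symmetry consequence $w(z/|z|^2)=-|z|^{n-\alpha}w(z)$. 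Substituting and collecting terms yields
\begin{equation*}
\frac{(-\Delta)^{\alpha/2}w(\bar x)}{C_{n,\alpha}}=\int_{|y|>1}(w(\bar x)-w(y))(K_1-K_2)\,dy+w(\bar x)\,I,
\end{equation*}
where $K_1=|\bar x-y|^{-(n+\alpha)}$, $K_2=\bigl||\bar x|y-\bar x/|\bar x|\bigr|^{-(n+\alpha)}$, and $I=\int_{|y|>1}(|y|^{\alpha-n}+1)K_2\,dy$. The sign $K_1>K_2$ on $\{|y|>1\}$ follows from the algebraic identity $\bigl||\bar x|y-\bar x/|\bar x|\bigr|^2-|\bar x-y|^2=(|\bar x|^2-1)(|y|^2-1)>0$, and combined with $w(\bar x)\leq w(y)$ this puts the first integral into the advertised form.

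The main obstacle is the lower estimate $I\geq c(n,\alpha)/(d(\bar x))^\alpha(1+d(\bar x))^n$. I would obtain this by the change of variables $u=|\bar x|y$, extracting the factor $|\bar x|^{-n}=(1+d(\bar x))^{-n}$, and then restricting the integral to the half-ball $B_{d(\bar x)/2}(|\bar x|\xi)\cap\{|u|>|\bar x|\}$ with $\xi=\bar x/|\bar x|$. This half-ball has volume $\sim d(\bar x)^n$ and on it the kernel is bounded below by $(2d(\bar x))^{-(n+\alpha)}$, giving the claimed lower bound. Because $w(\bar x)\leq 0$, multiplying by $w(\bar x)$ reverses the inequality and produces the desired upper bound $Cw(\bar x)/(d(\bar x))^\alpha(1+d(\bar x))^n$.

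For the ``in particular'' statement, I would rearrange the exact decomposition as
\begin{equation*}
\frac{(-\Delta)^{\alpha/2}w(\bar x)}{C_{n,\alpha}}=w(\bar x)\int_{|y|>1}(K_1+|y|^{\alpha-n}K_2)\,dy-\int_{|y|>1}w(y)(K_1-K_2)\,dy,
\end{equation*}
and discard the first summand, which is non-positive because $w(\bar x)\leq 0$ and the integrand is positive. Part (2) proceeds identically, but with the inversion applied to the outer piece $\{|y|>1\}$; the bulk integral is then over $\{|y|<1\}$, and the corresponding geometric estimate now yields $I\geq c/d(\bar x)^\alpha$ since the prefactor $|\bar x|^{-n}$ exceeds $1$ (as $|\bar x|<1$) and is absorbed into the constant. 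Throughout, the only real difficulty is the tangential behavior of $K_2$ near the closest point on the unit sphere; the half-ball construction is exactly what makes this singularity compatible with the $d^{-\alpha}$ scaling.
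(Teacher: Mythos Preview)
Your approach is essentially the paper's: the same split at the unit sphere, the same Kelvin inversion on the complementary piece using the anti-symmetry, and the same regrouping into the $(K_1-K_2)$ bulk term plus the residual $w(\bar x)\,I$. The only difference is in how $I$ is bounded below: the paper performs a second Kelvin substitution to rewrite $I$ as $\int_{|y|<1}(1+|y|^{-(n-\alpha)})|\bar x-y|^{-(n+\alpha)}\,dy$ and then splits into the cases $1<|\bar x|<3$ and $|\bar x|\ge 3$, whereas you substitute $u=|\bar x|y$ and use a half-ball centred at $\bar x$; both routes give the same $c/[d(\bar x)^\alpha(1+d(\bar x))^n]$. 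For part~(2) your half-ball argument works cleanly when $|\bar x|$ is close to $1$; when $|\bar x|$ is bounded away from $1$ the bound $I\ge c/d(\bar x)^\alpha$ is trivial since $d(\bar x)$ is bounded below, so this is only a cosmetic gap.
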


To establish a positive lower bound of the difference between the solution and its Kelvin transform near infinity, we also need the following maximum principle for fractional superharmonic functions.     

\begin{theorem}[Maximum principle for fractional  superharmonic functions]\label{sym mp}
Let $\lambda_{0}>0$ and $\Omega \subset B_{\lambda_{0}}^{c}(0)$ be an open set. Suppose that $h \in\mathcal{L}_{\alpha} \cap C_{loc}^{1,1}(\Omega)$ is lower semicontinuous on $\bar{\Omega}$ and satisfies 
$$
\begin{cases}
(-\Delta)^{\alpha / 2} h(x) \geq 0 ~~~ &\text { in } \Omega, \\
h(x) \geq 0 ~~~ & \text { in }  B_{\lambda_{0}}^{c}(0) \backslash \Omega, \\
h(x)=-\left(\frac{\lambda_{0}}{|x|}\right)^{n-\alpha} h\left(\frac{\lambda_{0}^{2} x}{|x|^{2}}\right) ~~~ &\text{ in } B_{\lambda_{0}}^{c}(0).  
\end{cases}
$$
If
\begin{equation*}
\liminf_{|x| \rightarrow \infty} h(x) \geq 0,  
\end{equation*}
then 
\begin{equation}\label{eq 20}
h(x) \geq 0 \;\;\;\; \text{ in } \Omega.    
\end{equation}
\end{theorem}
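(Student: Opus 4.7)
The plan is to argue by contradiction, to locate an interior minimum of $h$ in $\Omega$, and then to apply Theorem~\ref{MP 1}(1) there to derive a contradiction with the superharmonicity hypothesis. A preliminary rescaling $\tilde h(x) := h(\lambda_0 x)$ reduces matters to the case $\lambda_0 = 1$, since the Kelvin anti-symmetry, the decay at infinity, and the sign of $(-\Delta)^{\alpha/2}$ are all preserved. Evaluating the Kelvin identity on $|x| = 1$ forces $h \equiv 0$ on $\partial B_1$, which will act as an inner Dirichlet-type boundary condition.

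Next, I would suppose for contradiction that $m := \inf_{|x| > 1} h(x) < 0$. Because $h \geq 0$ on $B_1^c \setminus \Omega$, any minimizing sequence $\{x_k\}$ eventually lies in $\Omega$. The hypothesis $\liminf_{|x|\to\infty} h(x) \geq 0$ confines $\{x_k\}$ to a bounded set, while $h \equiv 0$ on $\partial B_1$ together with lower semicontinuity on $\bar\Omega$ keeps $\{x_k\}$ a uniform distance from $\partial B_1$. Passing to a subsequence $x_k \to \bar x$ with $1 < |\bar x| < \infty$, lower semicontinuity gives $h(\bar x) \leq m$, and therefore $h(\bar x) = m < 0$. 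The point $\bar x$ cannot lie on $\partial\Omega$: on $\partial\Omega \cap B_1^c$ one has $h \geq 0$, and on $\partial B_1$ one has $h = 0$, both contradicting $m < 0$. Hence $\bar x$ is an interior point of $\Omega$ that attains a global minimum of $h$ over $\{|x|>1\}$, and $h$ is $C^{1,1}_{loc}$ at $\bar x$.

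At this stage I would invoke Theorem~\ref{MP 1}(1) at $\bar x$. The elementary identity
\begin{equation*}
\Bigl|\tfrac{\bar x}{|\bar x|} - |\bar x|\, y\Bigr|^{2} - |\bar x - y|^{2} = (|\bar x|^{2}-1)(|y|^{2}-1)
\end{equation*}
shows that the kernel difference
\begin{equation*}
\frac{1}{|\bar x - y|^{n+\alpha}} - \frac{1}{\bigl|\tfrac{\bar x}{|\bar x|} - |\bar x|\, y\bigr|^{n+\alpha}}
\end{equation*}
is strictly positive for $|y| > 1$; combined with $h(\bar x) - h(y) \leq 0$ (since $\bar x$ is a global minimum), this makes the integral term in the estimate of Theorem~\ref{MP 1}(1) nonpositive. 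The weighted boundary term $h(\bar x)/[d(\bar x)^\alpha (1 + d(\bar x))^n]$ is strictly negative, because $h(\bar x) < 0$ and $d(\bar x) > 0$. Hence $(-\Delta)^{\alpha/2} h(\bar x) < 0$, contradicting the hypothesis $(-\Delta)^{\alpha/2} h \geq 0$ in $\Omega$.

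The main obstacle is the existence of a genuine interior minimizer in Step~2: one must simultaneously exclude escape to infinity (via the $\liminf$ condition), return to $\partial B_1$ (via the Kelvin identity yielding $h \equiv 0$ there), and leakage to $\partial\Omega \cap B_1^c$ (via $h \geq 0$ on that set), so that lower semicontinuity on $\bar\Omega$ produces a minimum point strictly inside $\Omega$. Once such a point is secured, the sign computation of the kernel difference makes the application of Theorem~\ref{MP 1}(1) and the resulting contradiction essentially mechanical.
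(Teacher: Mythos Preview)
Your proposal is correct and follows essentially the same contradiction-at-a-minimum strategy as the paper. The only difference is packaging: you invoke Theorem~\ref{MP 1}(1) to get the sign of $(-\Delta)^{\alpha/2}h(\bar x)$, whereas the paper redoes that kernel decomposition inline (splitting the integral over $B_{\lambda_0}$ and $B_{\lambda_0}^c$ and using the anti-symmetry directly); your justification of the existence of the interior minimizer is in fact more careful than the paper's, which simply asserts it.
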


Based on Theorems \ref{MP 1} and \ref{sym mp}, we further develop the direct method of moving spheres for the non-local equation \eqref{In-3} and establish the following classification result. 
\begin{theorem}\label{main thm}
Suppose that $f:(0, +\infty) \rightarrow[0, +\infty)$ is locally bounded and $\frac{f(t)}{t^{\tau}}$ is monotonically decreasing in $(0, +\infty) $ with $\tau=\frac{n+\alpha}{n-\alpha}$. Let $u \in \mathcal{L}_{\alpha} \cap C_{\text {loc }}^{1,1}\left(\mathbb{R}^{n}\right)$ be a positive solution to
    \begin{equation}\label{main eq}
        (-\Delta)^{\alpha / 2} u(x)=f(u(x)), \quad\quad  x\in \mathbb{R}^{n}.
    \end{equation}
Then either $u\equiv C_0$ for some constant $C_0 > 0$ satisfying $f(C_0)=0$, or there exists $C>0$ such that $f(t)=Ct^\tau$ for every $t\in \Big( 0, \max\limits _{x \in \mathbb{R}^n} u(x) \Big]$ and
     \begin{equation*}
         u(x)=\frac{\beta_1}{\Big(\left|x-x_0\right|^2+\beta_2^2 \Big)^{\frac{n-\alpha}{2}}} \quad\quad  \forall x \in \mathbb{R}^n, 
     \end{equation*}
for some $\beta_1, \beta_2>0$ and $x_0 \in \mathbb{R}^n$.
\end{theorem}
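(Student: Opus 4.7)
The plan is to carry out the direct method of moving spheres, with Theorems \ref{MP 1} and \ref{sym mp} as the analytical backbone. For each center $x_{0} \in \mathbb{R}^{n}$ and radius $\lambda > 0$, I would introduce the Kelvin transform
\[
u_{x_{0},\lambda}(x) = \left(\frac{\lambda}{|x-x_{0}|}\right)^{n-\alpha} u\!\left(x_{0} + \frac{\lambda^{2}(x-x_{0})}{|x-x_{0}|^{2}}\right)
\]
and the spherically anti-symmetric deficit $w_{x_{0},\lambda}(x) = u_{x_{0},\lambda}(x) - u(x)$. By the Kelvin invariance of $(-\Delta)^{\alpha/2}$,
\[
(-\Delta)^{\alpha/2} w_{x_{0},\lambda}(x) = \left(\frac{\lambda}{|x-x_{0}|}\right)^{n+\alpha} f(u(y)) - f(u(x))
\]
for $|x-x_{0}|>\lambda$, where $y = x_{0} + \lambda^{2}(x-x_{0})/|x-x_{0}|^{2}$. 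Since $u(y) = (|x-x_{0}|/\lambda)^{n-\alpha} u_{x_{0},\lambda}(x) \geq u_{x_{0},\lambda}(x)$ and $(n-\alpha)\tau = n+\alpha$, the hypothesis that $f(t)\,t^{-\tau}$ is decreasing yields the key pointwise bound
\[
(-\Delta)^{\alpha/2} w_{x_{0},\lambda}(x) \leq f(u_{x_{0},\lambda}(x)) - f(u(x)) \quad \text{in } \{|x-x_{0}|>\lambda\}.
\]

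The argument proceeds in three stages. Stage~1 (initiation): for each fixed $x_{0}$, show the existence of $\lambda_{0}(x_{0})>0$ such that $w_{x_{0},\lambda}\geq 0$ in the exterior whenever $0<\lambda\leq \lambda_{0}(x_{0})$; if this fails, Theorem \ref{sym mp} confines any negative infimum to a bounded set, Theorem \ref{MP 1}(1) gives a quantitative upper bound for $(-\Delta)^{\alpha/2} w_{x_{0},\lambda}$ at such a minimum $\bar{x}$, and comparing with the equation-side estimate yields a contradiction for $\lambda$ sufficiently small. Stage~2 (critical radius): set $\bar{\lambda}(x_{0}) := \sup\{\lambda > 0 : w_{x_{0},\mu}\geq 0 \text{ in } \{|x-x_{0}|>\mu\} \text{ for every } 0<\mu \leq \lambda\}$. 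If $\bar{\lambda}(x_{0}) = +\infty$ for some $x_{0}$, a limiting argument as $\lambda \to +\infty$ forces $u \equiv C_{0}$ with $f(C_{0})=0$, giving the first alternative; otherwise $\bar{\lambda}(x_{0}) < +\infty$ for every $x_{0}$, and one must prove $w_{x_{0},\bar{\lambda}(x_{0})}\equiv 0$ in the exterior. Stage~3 (rigidity): once $u$ is invariant under every Kelvin inversion through $\partial B_{\bar{\lambda}(x_{0})}(x_{0})$, a classical calculus lemma in the style of Li--Zhang forces
\[
u(x) = \frac{\beta_{1}}{\bigl(|x-\tilde{x}_{0}|^{2}+\beta_{2}^{2}\bigr)^{(n-\alpha)/2}},
\]
and inserting this bubble back into \eqref{main eq} identifies $f(t) = Ct^{\tau}$ on $(0, \max_{x \in \mathbb{R}^{n}} u(x)]$.

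The principal obstacle is establishing $w_{x_{0},\bar{\lambda}(x_{0})} \equiv 0$ in Stage~2 without Lipschitz continuity or monotonicity of $f$: the sign of $f(u_{x_{0},\lambda}) - f(u)$ is indeterminate on the subregion where $u_{x_{0},\lambda} \leq u$, so the structural bound above alone does not close the usual "push $\lambda$ past $\bar{\lambda}(x_{0})$" step. To overcome this, I would construct elaborate spherically anti-symmetric auxiliary functions $\widetilde{w}$ which coincide with $w_{x_{0},\lambda}$ on the "good" subregion $\{f(u_{x_{0},\lambda}) \leq f(u)\}$ but are modified on the complementary "bad" subregion while preserving spherical anti-symmetry, in such a way that the uncontrolled sign is neutralized on the region of interest. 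Applying Theorem \ref{MP 1} to $\widetilde{w}$ restricted to the good subregion, and invoking Theorem \ref{sym mp} to control the tail and confine minima to bounded sets, delivers the contradiction needed to extend the procedure beyond $\bar{\lambda}(x_{0})$ and so forces $w_{x_{0},\bar{\lambda}(x_{0})} \equiv 0$.
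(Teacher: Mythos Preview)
Your sign convention is inverted. With $w_{x_0,\lambda}=u_{x_0,\lambda}-u$, the inequality $w_{x_0,\lambda}\ge 0$ in $\{|x-x_0|>\lambda\}$ cannot hold for small $\lambda$: already for a standard bubble centered at $x_0$ one has $u_{x_0,\lambda}(x)\sim (\lambda/|x|)^{n-\alpha}u(x_0)<u(x)$ at large $|x|$ when $\lambda$ is small. The paper (and the standard outward moving-sphere scheme) works with $w_\lambda=u-u_\lambda$ and shows $w_\lambda\ge 0$ in the exterior; your ``key pointwise bound'' then becomes a \emph{lower} bound $(-\Delta)^{\alpha/2}w_\lambda\ge \frac{f(u(y))}{u(y)^\tau}\bigl(u^\tau-u_\lambda^\tau\bigr)$, which is what couples with Theorem~\ref{MP 1} at a negative minimum. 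Note also that the paper's Step~1 is not run via Theorems~\ref{MP 1}--\ref{sym mp} at all: it uses the elementary gradient observation $\nabla(|x|^{(n-\alpha)/2}u)\cdot x>0$ near the center together with the B\v ocher-type lower bound $u(x)\ge C|x|^{-(n-\alpha)}$ at infinity (Lemma~\ref{u at infty}).

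More importantly, your Stage~2 mechanism is not the one that actually works here. Partitioning by the sign of $f(u_\lambda)-f(u)$ gives a set with no usable structure when $f$ is neither Lipschitz nor monotone, and ``modifying $w$ on the bad subregion while preserving anti-symmetry'' is left unspecified. The paper's device is different and concrete: it constructs an anti-symmetric barrier $\varphi$ equal to $w_{\lambda_0}$ on a compact annulus and to $\varepsilon/|x|^{n-\alpha}$ outside (Lemma~\ref{aux fun}), and then splits the far region $\{|x|>R\}$ into $D=\{u_{\lambda_0}<u<2u_{\lambda_0}\}$ and $\tilde D=\{u\ge 2u_{\lambda_0}\}$. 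The point is that on $D$, for $|x|\gg\lambda_0$, one has $(|x|/\lambda_0)^{n-\alpha}u_{\lambda_0}>2u_{\lambda_0}>u$, so the structural hypothesis alone yields $(-\Delta)^{\alpha/2}w_{\lambda_0}\ge 0$ there, and Theorem~\ref{sym mp} can be applied to $h=w_{\lambda_0}-\varphi$ on $D$; on $\tilde D$ no equation is needed since $w_{\lambda_0}\ge u_{\lambda_0}\ge c|x|^{-(n-\alpha)}$ directly. This yields the quantitative tail bound $w_{\lambda_0}\ge \varepsilon|x|^{-(n-\alpha)}$ (Lemma~\ref{estimates w}), which is exactly what confines the minimizing sequence to a compact set and, combined with Theorem~\ref{MP 1}(1), rules out accumulation on $\partial B_{\lambda_0}$, allowing the sphere to be pushed past $\lambda_0$. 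Your outline has the right overall architecture, but the heart of the argument is this specific $D/\tilde D$ split plus the fundamental-solution barrier, not a partition governed by the sign of $f(u_\lambda)-f(u)$.
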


A similar result was also obtained in Chen-Li-Zhang \cite[Theorem 2]{CLZ}. In the second step of moving spheres, they used a key estimate in \cite[Lemma A.2]{CLZ} whose proof relies on the integral equation 
\begin{equation}\label{int01}
u(x)=\int_{\mathbb{R}^n}\frac{u^p(y)}{|x-y|^{n-\alpha}}dy 
\end{equation} 
instead of
\begin{equation}\label{int02}
u(x)=\int_{\mathbb{R}^n}\frac{f(u(y))}{|x-y|^{n-\alpha}}dy. 
\end{equation} 
For the special case $f(t) = t^p$, the proof of \cite[Lemma A.2]{CLZ} takes advantage of the monotonically increasing property of $t^p$. Therefore, their method essentially uses the monotonicity of $f$. In addition, to ensure that any solution of the differential equation \eqref{main eq} satisfies the integral equation \eqref{int02}, an integrability condition on $f$ is also required. In our proof of Theorem \ref{main thm}, we fill these two gaps using Theorems \ref{MP 1} and \ref{sym mp}. By constructing appropriate anti-symmetric auxiliary functions, we will apply Theorem \ref{sym mp} to a partial region instead of the whole outer region. This effectively eliminates the monotonicity and integrability assumptions on $f$. Moreover, our proof of Theorem \ref{main thm} does not depend at all on the integral representation of the solution, and hence this method can also be used for more general non-local equations (where no equivalent integral equations exist).

As an application of Theorem \ref{main thm}, we use the direct moving sphere method to give an alternative proof for the Liouville-type theorem of the following fractional Lane-Emden equation in the half space  
\begin{equation}\label{half eq-In} 
    \begin{cases}
        (-\Delta)^{\alpha/2}u(x)=u^p(x),\;\;\;\; &x\in \mathbb{R}^n_+,\\
        u\equiv 0, \;\;\;\; &x\in (\mathbb{R}^n_+)^c,
    \end{cases}
\end{equation}
where $1<p\leq\frac{n+\alpha}{n-\alpha}$. 

\begin{theorem}\label{half thm} 
Let $1<p\leq\frac{n+\alpha}{n-\alpha}$ and $u\in \mathcal{L}_\alpha \cap C_{loc}^{1,1}(\mathbb{R}^n_+) \cap C_{loc}(\overline{\mathbb{R}^n_+})$ be a nonnegative solution to \eqref{half eq-In}.  Then $u\equiv 0$ in $\mathbb{R}^n_+$. 
\end{theorem}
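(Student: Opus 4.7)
The plan is to argue by contradiction. Assume $u \not\equiv 0$; by the strong maximum principle for the fractional Laplacian, $u > 0$ in $\mathbb{R}^n_+$. I would apply the direct method of moving spheres at a boundary point of $\partial\mathbb{R}^n_+$. By translation invariance along the boundary, take this point to be the origin. For $\lambda > 0$ set
\[
u_\lambda(x) := \Bigl(\frac{\lambda}{|x|}\Bigr)^{n-\alpha} u\Bigl(\frac{\lambda^2 x}{|x|^2}\Bigr), \qquad w_\lambda(x) := u(x) - u_\lambda(x).
\]
Since $0 \in \partial\mathbb{R}^n_+$, the inversion $x \mapsto \lambda^2 x/|x|^2$ preserves both $\mathbb{R}^n_+$ and $(\mathbb{R}^n_+)^c$; hence $u_\lambda$ is defined on $\mathbb{R}^n_+ \setminus \{0\}$, vanishes on $(\mathbb{R}^n_+)^c$, and $w_\lambda$ is spherically anti-symmetric about $\partial B_\lambda(0)$. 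The conformal covariance of $(-\Delta)^{\alpha/2}$ yields
\[
(-\Delta)^{\alpha/2} w_\lambda(x) = u^p(x) - \Bigl(\frac{|x|}{\lambda}\Bigr)^{\sigma} u_\lambda^p(x), \qquad \sigma := p(n-\alpha) - (n+\alpha) \le 0,
\]
on $\mathbb{R}^n_+ \setminus \overline{B_\lambda(0)}$, with $\sigma = 0$ in the critical case.

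First I would start the moving spheres: show $w_\lambda \ge 0$ in $\mathbb{R}^n_+ \setminus \overline{B_\lambda(0)}$ for all sufficiently small $\lambda > 0$. At a hypothetical negative infimum $\bar x$ of $w_\lambda$, the mean-value bound together with $|\bar x|/\lambda > 1$ and $\sigma \le 0$ gives the lower bound $(-\Delta)^{\alpha/2} w_\lambda(\bar x) \ge p\, u_\lambda^{p-1}(\bar x)\, w_\lambda(\bar x)$, while Theorem \ref{MP 1}(1), applied after rescaling $\partial B_\lambda(0)$ to the unit sphere (the antisymmetric integral term is nonpositive at the minimum), gives the upper bound $C\, w_\lambda(\bar x)/d(\bar x)^\alpha$; dividing by $w_\lambda(\bar x) < 0$ produces $u_\lambda^{p-1}(\bar x) \ge c\, d(\bar x)^{-\alpha}$, which is incompatible with the smallness of $u_\lambda$ for $\lambda$ small, since $u$ is bounded near the origin and $u(0) = 0$. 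Then define $\bar\lambda := \sup\{\mu > 0 : w_\lambda \ge 0 \text{ in } \mathbb{R}^n_+ \setminus \overline{B_\lambda(0)} \text{ for all } 0 < \lambda \le \mu\}$ and split into two cases. If $\bar\lambda < \infty$, continuity gives $w_{\bar\lambda} \ge 0$; were $w_{\bar\lambda} \not\equiv 0$, Theorem \ref{sym mp} would force $w_{\bar\lambda} > 0$ strictly in the exterior, and a narrow-region argument driven by Theorem \ref{MP 1} would let one push past $\bar\lambda$, contradicting its maximality. Hence $u \equiv u_{\bar\lambda}$, i.e., $u$ is Kelvin-invariant through $\partial B_{\bar\lambda}(0)$. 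Repeating at a second boundary point produces a second inversion symmetry; combining the two inversions with the strong maximum principle lets one reconstruct a positive solution of \eqref{In-3} on all of $\mathbb{R}^n$, which Theorem \ref{main thm} then classifies as a constant (impossible since $f(C_0) = C_0^p > 0$), a strictly positive bubble (possible only in the critical case, and incompatible with $u = 0$ on $\partial\mathbb{R}^n_+$), or as non-existent (in the strictly subcritical case, since $t^p/t^\tau$ is not constant). Each alternative yields a contradiction. If instead $\bar\lambda = \infty$, the inequality $u \ge u_\lambda$ for every $\lambda > 0$ gives, by the standard calculus lemma, the monotonicity of $r \mapsto r^{(n-\alpha)/2} u(r\theta)$ along each ray $\theta$ with $\theta_n > 0$, hence a pointwise lower bound $u(x) \ge C|x|^{-(n-\alpha)/2}$ as $|x| \to \infty$; substituting into the equation contradicts $u \in \mathcal{L}_\alpha$.

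The hard part will be extracting the final contradiction in the case $\bar\lambda < \infty$, since the Kelvin symmetry $u \equiv u_{\bar\lambda}$ lives only inside $\mathbb{R}^n_+$, so Theorem \ref{main thm} cannot be applied to $u$ directly; one must combine inversion symmetries at several boundary points to produce a full-space solution. The starting step is also delicate because no a priori decay of $u$ at infinity is available, which is precisely where Theorems \ref{MP 1} and \ref{sym mp}, replacing the classical decay hypothesis in the antisymmetric maximum principle, become essential.
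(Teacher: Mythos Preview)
Your outline departs substantially from the paper's route and, as written, contains genuine gaps in both terminal cases.

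First, the paper does \emph{not} run the moving spheres from a point on $\partial\mathbb{R}^n_+$. It centers the spheres at $x_R=(0,\dots,0,-R)$ strictly below the half-space and moves $\lambda$ from $R$ upward; for $\lambda$ close to $R$ the set $B_\lambda(x_R)\cap\mathbb{R}^n_+$ is a genuine narrow region, and Theorem~\ref{MP 1}(2) then yields the starting step immediately. The conclusion of the moving-sphere argument in the paper is only that $\bar\lambda(R)=\infty$ for every $R$, which after letting $R\to\infty$ gives monotonicity of $u$ in $x_n$ (Lemma~\ref{half lemma}). The contradiction is then obtained by a local blow-up (doubling) argument together with Theorem~\ref{main thm} applied to the limit equation $(-\Delta)^{\alpha/2}u_\infty=u_\infty^p+b$ on $\mathbb{R}^n$, not by any Kelvin-symmetry reconstruction.

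Your starting step does not close. From Theorem~\ref{MP 1}(1) and the mean-value bound you correctly obtain, at a negative minimum $\bar x$,
\[
p\,u_\lambda^{p-1}(\bar x)\;\ge\;\frac{C}{d(\bar x)^{\alpha}\bigl(1+d(\bar x)\bigr)^{n}}.
\]
Since $u_\lambda\le\sup_{B_\lambda}u=:\epsilon(\lambda)\to0$, this forces $d(\bar x)\to\infty$, \emph{not} a contradiction: nothing prevents $u(\bar x)$ from being smaller than $u_\lambda(\bar x)\sim(\lambda/|\bar x|)^{n-\alpha}\epsilon(\lambda)$ at such distant points, because you have no a priori lower bound on $u$ at infinity in the half-space (the analogue of Lemma~\ref{u at infty} is unavailable). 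The narrow-region mechanism that makes the paper's starting step work is precisely what your boundary-centered choice loses.

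Your case $\bar\lambda=\infty$ is also incorrect. The radial monotonicity you derive gives only $u(r\theta)\ge c(\theta)\,r^{-(n-\alpha)/2}$, and
\[
\int_{\{|x|>1\}}\frac{|x|^{-(n-\alpha)/2}}{1+|x|^{n+\alpha}}\,dx
\;\le\;C\int_1^\infty r^{-1-(n+\alpha)/2}\,dr\;<\;\infty,
\]
so there is no contradiction with $u\in\mathcal{L}_\alpha$. You would need a much stronger growth estimate, or a different obstruction, to rule this case out.

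Finally, in the case $\bar\lambda<\infty$ you assert that two boundary inversion symmetries ``reconstruct a positive solution of \eqref{In-3} on all of $\mathbb{R}^n$''. This is not explained and is not obviously true: inversions through spheres centered on $\partial\mathbb{R}^n_+$ preserve $\mathbb{R}^n_+$, so composing two of them still only gives information about $u$ on $\mathbb{R}^n_+$ with $u\equiv0$ outside. Note also that $u\equiv u_{\bar\lambda}$ forces $|x|^{n-\alpha}u(x)\to\bar\lambda^{\,n-\alpha}u(0)=0$, so any putative global extension would have to decay faster than $|x|^{-(n-\alpha)}$, which is incompatible with the bubble classification you invoke.
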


The same result has been obtained in \cite{CFY,CLZ,CLL}, where the method is to first prove that the solution $u$ depends only on $x_n$ and then obtain the above Liouville-type theorem based on the integral equation equivalent to \eqref{half eq-In}. Our proof is inspired by Li-Lin \cite{LL} and does not depend on the corresponding integral equation. We first show the monotonicity of $u$ with respect to $x_n$ by the direct method of moving spheres, and then establish the Liouville-type theorem via the local blow up analysis. Because of the non-locality, a result in Du-Jin-Xiong-Yang \cite{DJXY} shows that the limiting equation reads as 
\[
 (-\Delta)^{\alpha/2}u_\infty=u_\infty^p+b \quad\quad  \text{in } \mathbb{R}^n 
\]
for some $b \geq 0$ when carrying out the local blow up analysis. This is different from the classical Laplacian case, and our Theorem \ref{main thm} for the general non-linearity will play an important role.

The paper is organized as follows. In Section \ref{pre}, we prove Theorems \ref{MP 1} and \ref{sym mp} and then apply them to establish several essential estimates. In Section \ref{main section}, we further develop the direct method of moving spheres to prove Theorem \ref{main thm}. In Section \ref{application}, we apply the direct method of moving spheres and local blow up argument to show Theorem \ref{half thm}.

\section{Maximum principles and essential estimates}\label{pre}
In this section, we prove the maximum principles in Theorems \ref{MP 1} and \ref{sym mp}. Then we apply Theorems \ref{MP 1} and \ref{sym mp} to establish several essential estimates that will be used in the next section.  
\begin{proof}[Proof of Theorem \ref{MP 1}]
(1) For convenience, we would omit $C_{n,\alpha} P.V.$ in the definition of $(-\Delta)^{\alpha/ 2}$. Notice that   
\begin{equation}\label{eq 10} 
    (-\Delta)^{\alpha/ 2} w(\bar{x})= \underbrace{\int_{|y|>1} \frac{w(\bar{x})-w(y)}{|\bar{x}-y|^{n+\alpha}} d y}_{\text{I}}+\underbrace{\int_{|y|<1} \frac{w(\bar{x})-w(y)}{|\bar{x}-y|^{n+\alpha}} d y}_{\text{II}}.
\end{equation}
First, we compute II: 
\begin{equation*}\label{eq 5}
\begin{aligned}
        \text{II}=&\int_{|z|>1} \frac{w(\bar{x})-w\left(\frac{z}{|z|^{2}}\right)}{\left\lvert\, \bar{x}-\frac{z}{|z|^{2}}\right\lvert^{n+\alpha}} \frac{d z}{|z|^{2 n}}
        =\int_{|z|>1} \frac{w(\bar{x})+|z|^{n-\alpha}w(z)}{|\bar{x}|^{n+\alpha}|z|^{n-\alpha}\left|\frac{\bar{x}}{|\bar{x}|^{2}}-z\right|^{n+\alpha}}d z\\
        =&\int_{|z|>1} \frac{w(\bar{x})(1+|z|^{n-\alpha})+|z|^{n-\alpha}(w(z)-w(\Bar{x}))}{|\bar{x}|^{n+\alpha}|z|^{n-\alpha}\left|\frac{\bar{x}}{|\bar{x}|^{2}}-z\right|^{n+\alpha}}dz\\
        =&\underbrace{w(\bar{x})\int_{|z|>1} \frac{1+|z|^{n-\alpha}}{|\bar{x}|^{n+\alpha}\left|\frac{\bar{x}}{|\bar{x}|^{2}}-z\right|^{n+\alpha}|z|^{n-\alpha}} d z}_{\text{II}^\prime}-\underbrace{ \int_{|z|>1} \frac{w(\bar{x})-w(z)}{\left|\frac{\bar{x}}{|\bar{x}|}-|\bar{x}|z\right|^{n+\alpha}} d z}_{\text{II}^{\prime\prime}}.
\end{aligned}
\end{equation*}
Now we estimate $\text{II}^{\prime}$. Since $w(\bar{x}) \leq 0$, it follows that
\begin{equation}\label{eq 2}
\begin{aligned}
\text{II}^{\prime}
& =\frac{w (\bar{x})}{|\bar{x}|^{n+\alpha}} \int_{|z|>1} \frac{\frac{1}{|z|^{n-\alpha}}+1}{\left|\frac{\bar{x}}{|\bar{x}|^2}-z\right|^{n+\alpha}} d z =w(\bar{x})\int_{|y|<1} \frac{1+\frac{1}{|y|^{n-\alpha}}}{|\bar{x}-y|^{n+\alpha}} d y  \leq  w(\bar{x}) \int_{|y|<1} \frac{d y}{|\bar{x}-y|^{n+\alpha}}.
\end{aligned}
\end{equation}
For $1<|\bar{x}|<3$, we have 
\begin{equation}\label{eq 35}
     w(\bar{x}) \int_{|y|<1} \frac{d y}{|\bar{x}-y|^{n+\alpha}} \leq C w(\bar{x}) \int_{d(\bar{x})}^{d(\bar{x})+1} \frac{r^{n-1}}{r^{n+\alpha}} d r \leq  \frac{C w(\bar{x})}{(d(\bar{x}))^\alpha}, 
\end{equation}
where $C=C(n,\alpha)>0$ and $d(\bar{x})=\operatorname{dist}\left(\bar{x}, \partial B_1(0)\right)$. For $|\bar{x}|\geq3$, we have 
\begin{equation}\label{eq 36}
     w(\bar{x}) \int_{|y|<1} \frac{d y}{|\bar{x}-y|^{n+\alpha}} \leq   C w(\bar{x}) \int_{|y|<1} \frac{d y}{|\bar{x}|^{n+\alpha}} \leq \frac{C w(\bar{x})}{(d(\bar{x}))^{n+\alpha}}.
\end{equation}
Combining (\ref{eq 2}), (\ref{eq 35}) and (\ref{eq 36}), we obtain
\begin{equation*}
    \text{II}^{\prime}\leq\frac{C  w(\bar{x})}{(d(\bar{x}))^{\alpha}(1+d(\bar{x}))^{n}}.
\end{equation*}
Thus
\begin{equation*}
    \begin{aligned}
         (-\Delta)^{\alpha/ 2} w(\bar{x}) & =\text{II}^\prime+(\text{I}+\text{II}^{\prime\prime}) \\
         & \leq C \Bigg( \frac{ w(\bar{x})}{(d(\bar{x}))^{\alpha}(1+d(\bar{x}))^{n}} \\
         & ~~~~~ +\int_{|y|>1}\left(w(\Bar{x})-w(y)\right)\bigg(\frac{1}{|\Bar{x}-y|^{n+\alpha}}-\frac{1}{\left|\frac{\bar{x}}{|\Bar{x}|}-|\Bar{x}|y\right|^{n+\alpha}}\bigg) dy \Bigg). 
    \end{aligned} 
\end{equation*}
Moreover, since $\left|\frac{x}{|x|}-|x|y\right|^{2}- |x-y|^{2} =(|x|^2 -1)(|y|^2 -1) >0 $ for $|x|, |y|>1$, we have
\begin{equation*}\label{eq 3}
 (-\Delta)^{\alpha/ 2} w(\bar{x}) \leq -C \int_{|y|>1}w(y)\bigg(\frac{1}{|\Bar{x}-y|^{n+\alpha}}-\frac{1}{\left|\frac{\bar{x}}{|\Bar{x}|}-|\Bar{x}|y\right|^{n+\alpha}}\bigg)dy.
\end{equation*}

(2) The proof in the second part is similar to that of (1), so we only give the sketch. Let $0<|\bar{x}|<1$ satisfy 
\begin{equation*}
w(\bar{x})=\inf _{0<|x|<1} w(x) \leq  0. 
\end{equation*} 
By the definition of $(-\Delta)^{\alpha/ 2}$ and the spherical anti-symmetry, we have
\begin{equation*} 
\begin{aligned}
(-\Delta)^{\alpha/ 2} w(\bar{x}) &= \int_{|y|>1} \frac{w(\bar{x})-w(y)}{|\bar{x}-y|^{n+\alpha}} d y + \int_{|y|<1} \frac{w(\bar{x})-w(y)}{|\bar{x}-y|^{n+\alpha}} d y  \\
& =  \int_{|y|<1} \frac{w(\bar{x}) + |y|^{n-\alpha} w(y)}{\left| \frac{\bar{x}}{|\bar{x}|} - |\bar{x}| y \right|^{n+\alpha}}\frac{1}{|y|^{n-\alpha}}  dy + \int_{|y|<1} \frac{w(\bar{x})-w(y)}{|\bar{x}-y|^{n+\alpha}} d y \\
&= w(\bar{x}) \int_{|y|<1} \frac{1+|y|^{-(n-\alpha)}}{\left| \frac{\bar{x}}{|\bar{x}|} - |\bar{x}| y \right|^{n+\alpha}} dy  \\
& ~~~~~ + \int_{|y|<1} (w(\bar{x})-w(y)) \bigg( \frac{1}{|\bar{x}-y|^{n+\alpha}} - \frac{1}{\left| \frac{\bar{x}}{|\bar{x}|} - |\bar{x}| y \right|^{n+\alpha}} \bigg) dy,
\end{aligned}
\end{equation*} 
where we have also used the fact $\left|\bar{x} - \frac{y}{|y|^2} \right| |y|= \left| \frac{\bar{x}}{|\bar{x}|^2} -  y \right| |\bar{x}|$ in the second identity. Notice that for $0 < |\bar{x}|< 1$,  
\begin{equation*} 
\begin{aligned}
\int_{|y|<1} \frac{1+|y|^{-(n-\alpha)}}{\left| \frac{\bar{x}}{|\bar{x}|} - |\bar{x}| y \right|^{n+\alpha}} dy &= \frac{1}{|\bar{x}|^{n+\alpha}} \int_{|y|<1} \frac{1+|y|^{-(n-\alpha)}}{\left| \frac{\bar{x}}{|\bar{x}|^2} -  y \right|^{n+\alpha}} dy \\
&  =   \int_{|y| > 1} \frac{1+|y|^{-(n-\alpha)}}{\left|\bar{x} -  y \right|^{n+\alpha}} dy \\
& \geq \frac{C}{(d(\bar{x}))^\alpha}, 
\end{aligned}
\end{equation*} 
where $C=C(n,\alpha)>0$ and $d(\bar{x})=\operatorname{dist}\left(\bar{x}, \partial B_1(0)\right)$. Hence, we obtain   
\begin{equation*}
\begin{aligned}
(-\Delta)^{\alpha/2} w(\bar{x}) \leq C \Bigg( & \frac{ w(\bar{x})}{(d(\bar{x}))^\alpha} +\int_{|y|<1}\left(w(\Bar{x})-w(y)\right) \bigg( \frac{1}{|\Bar{x}-y|^{n+\alpha }}-\frac{1}{\left|\frac{\bar{x}}{|\Bar{x}|}-|\Bar{x}|y\right|^{n+\alpha}}\bigg)dy \Bigg). 
\end{aligned}
\end{equation*}
This completes the proof of Theorem \ref{MP 1}. 
\end{proof}

Now we prove Theorem \ref{sym mp}.
\begin{proof}[Proof of Theorem \ref{sym mp}]
    Suppose that (\ref{eq 20}) is not true. Then there exists $ x^{0} \in \Omega$ such that
$$
h\left(x^{0}\right)=\min _{x\in\Omega} h(x)<0 .
$$
By the definition, we have
\begin{equation}\label{eq 21}
(-\Delta)^{\alpha / 2} h(x^{0})=C_{n, \alpha}\Big(\int_{B_{\lambda_{0}}^{c}(0)} + P.V.\int_{B_{\lambda_{0}}(0)}\Big)\Big(\frac{h\left(x^{0}\right)-h(y)}{\left|x^{0}-y\right|^{n+\alpha}} d y\Big) .
\end{equation}
First, we compute the second term in (\ref{eq 21}). By spherical anti-symmetry,
\begin{equation}\label{eq 22}
\begin{aligned}
    &  C_{n, \alpha} P.V.\int_{B_{\lambda_{0}}(0)}\frac{h\left(x^{0}\right)-h(y)}{\left|x^{0}-y\right|^{n+\alpha}} d y \\
    & =  C_{n, \alpha} P.V.\int_{B_{\lambda_{0}}(0)}\frac{h\left(x^{0}\right)+\left(\frac{\lambda_{0}}{|y|}\right)^{n-\alpha} h\left(\frac{\lambda_{0}^{2} y}{|y|^{2}}\right)}{\left|x^{0}-y\right|^{n+\alpha}}dy\\
     & =  C_{n, \alpha} P.V.\int_{{|z|>\lambda_0}}\frac{h\left(x^{0}\right)+\left(\frac{|z|}{\lambda_{0}}\right)^{n-\alpha} h(z)}{\left|x^{0}-\frac{\lambda_0^2}{|z|^2}z\right|^{n+\alpha}}\Big(\frac{\lambda_0}{|z|}\Big)^{2n} dz\\
    &  =h\left(x^{0}\right) \int_{|z|>\lambda_{0}} \frac{1+\left(\frac{|z|}{\lambda_{0}}\right)^{n-\alpha}}{\left|x^{0}-\frac{\lambda_0^2}{|z|^2}z\right|^{n+\alpha}} \Big(\frac{\lambda_0}{|z|}\Big)^{2n} d z-\int_{|z|>\lambda_{0}} \frac{h\left(x^{0}\right)-h(z)}{\left|\frac{|x^{0}|}{\lambda_0}z-\frac{\lambda_0}{|x^0|}x^0\right|^{n+\alpha}} d z.
\end{aligned}
\end{equation}
Combining (\ref{eq 21}) and (\ref{eq 22}), we have
$$
\begin{aligned}
(-\Delta)^{\alpha / 2} h\left(x^{0}\right) = ~& C_{n, \alpha}\int_{B_{\lambda_{0}}^{c}(0)} \Big(\frac{1}{\left|x^{0}-y\right|^{n+\alpha}}-\frac{1}{\left|\frac{\left|x^{0}\right|}{\lambda_{0}} y-\frac{\lambda_{0}}{ |x_{0}|}x^{0}\right|^{n+\alpha}}\Big)\left(h\left(x^{0}\right)-h(y)\right) d y \\
& + C_{n, \alpha} h\left(x^{0}\right) \int_{|z|>\lambda_{0}} \frac{1+\left(\frac{|z|}{\lambda_{0}}\right)^{n-\alpha}}{\left|x^{0}-\frac{\lambda_0^2}{|z|^2}z\right|^{n+\alpha}} \left(\frac{\lambda_0}{|z|}\right)^{2n} d z.
\end{aligned} 
$$
Note that $h(x^0)<0$, $h\left(x^{0}\right)-h(y)<0$ for $|y|>\lambda_0$, and
\begin{equation*}
    \left| \frac{|x|}{\lambda_{0}} y-\frac{\lambda_{0}}{|x|}x\right|^2 > \left|x-y\right|^2\;\;\;\;\text{ for }|x|,|y|>\lambda_0.
\end{equation*}
Thus $(-\Delta)^{\alpha / 2} h\left(x^{0}\right) <0$. This is a contradiction with the assumption.  
\end{proof}

Next, we apply Theorems \ref{MP 1} and \ref{sym mp} to establish some essential estimates that will be used in Section \ref{main section}. By a B\v{o}cher type theorem, we give an estimate for the solution $u$ of \eqref{main eq} near infinity.  
\begin{lemma}[Estimate of the solution at infinity]\label{u at infty}
 Suppose that $u$ satisfies all the assumptions in Theorem \ref{main thm}.
Then there exists a positive constant $C$ such that
\begin{equation*}
 u(x) \geq \frac{C}{|x|^{n-\alpha}} \quad\quad\quad  \text{for all} ~ |x|>1.
\end{equation*}
\end{lemma}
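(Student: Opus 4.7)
The plan is to exploit that $u$ is $\alpha$-superharmonic on $\mathbb{R}^n$ (because $f(u)\geq 0$) and to compare $u$ from below with an explicit Poisson extension of its values inside $\overline{B_1}$ into the exterior $B_1^c$. The lower bound will then follow from the explicit decay of the exterior Poisson kernel at infinity, combined with a compactness argument on the bounded annulus $\{1\leq |x|\leq 2\}$.

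Let $\bar u$ denote the bounded $\alpha$-harmonic function on $B_1^c$ with data $u$ on $\overline{B_1}$ and $\bar u(x)\to 0$ as $|x|\to\infty$. By the exterior Poisson formula for $(-\Delta)^{\alpha/2}$,
\[
\bar u(x)=C_{n,\alpha}\int_{B_1}\frac{(|x|^2-1)^{\alpha/2}}{(1-|z|^2)^{\alpha/2}|x-z|^n}\,u(z)\,dz\quad\text{for } |x|>1.
\]
The function $u-\bar u$ is $\alpha$-superharmonic on $B_1^c$, vanishes on $\overline{B_1}$, and satisfies $\liminf_{|x|\to\infty}(u-\bar u)(x)\geq 0$ because $u\geq 0$. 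The standard maximum principle for $\alpha$-superharmonic functions on exterior domains (obtained by a minimum-point argument analogous to the proof of Theorem \ref{sym mp}) then yields $u(x)\geq \bar u(x)$ for every $|x|>1$.

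Since $u\in C^{1,1}_{loc}$ is everywhere positive, continuity gives $m:=\min_{\overline{B_{1/2}}} u>0$. For $|x|\geq 2$ and $z\in B_{1/2}$, the estimates $(|x|^2-1)^{\alpha/2}\geq c\,|x|^\alpha$, $(1-|z|^2)^{\alpha/2}\leq 1$, and $|x-z|\leq 2|x|$ insert into the Poisson integral to give $\bar u(x)\geq C_1/|x|^{n-\alpha}$ for some $C_1>0$. On the compact annulus $\{1\leq |x|\leq 2\}$, $u$ attains a positive minimum $m_0$, and $|x|^{n-\alpha}\geq 1$ there, so $u(x)\geq m_0\geq m_0/|x|^{n-\alpha}$. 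Taking $C=\min(C_1,m_0)$ gives the claim. The main technical obstacle is the rigorous justification of the exterior Poisson representation for $\bar u$ and of the exterior maximum principle comparing $u$ with $\bar u$; both can be established either by applying a Kelvin transform to the well-known interior Poisson formula and interior maximum principle for $(-\Delta)^{\alpha/2}$ on $B_1$, or by passing to the limit $R\to\infty$ in the Green's function representation of $u$ on $B_R$ (using the monotone convergence $G_R(x,y)\nearrow c_{n,\alpha}/|x-y|^{n-\alpha}$).
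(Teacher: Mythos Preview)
Your argument is correct, but it proceeds along a different line than the paper. The paper does not construct an exterior Poisson extension; instead it Kelvin-transforms $u$ about the unit sphere, so that the behaviour of $u$ at infinity becomes the behaviour of $u_1(x)=|x|^{-(n-\alpha)}u(x/|x|^2)$ near the origin. Since $u_1$ is nonnegative and $\alpha$-superharmonic in a punctured ball, the B\v{o}cher-type lower bound of Li--Wu--Xu \cite{LWX} gives $u_1(x)\geq cm$ for all small $|x|$, where $m=\inf_{B_r\setminus B_{r/2}}u_1>0$; undoing the Kelvin transform yields $u(x)\geq C|x|^{-(n-\alpha)}$ for $|x|>1$. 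This is shorter because the comparison lemma is quoted rather than built. Your approach is the Kelvin-dual of this: you stay on the exterior domain and compare $u$ directly against the exterior Poisson integral $\bar u$, whose explicit kernel gives the $|x|^{-(n-\alpha)}$ decay for free. The price is that you must justify the exterior Poisson formula and the exterior maximum principle yourself, which (as you note) is most cleanly done by Kelvin transform anyway---so in the end both proofs rest on the same interior facts. Your route is more self-contained and makes the mechanism of the lower bound transparent (it is literally the decay of the Poisson kernel), while the paper's route is more economical given that \cite{LWX} is already in the literature.
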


\begin{proof}
Consider the Kelvin transform of $u$
\begin{equation*}
    u_\lambda(x)=\big(\frac{\lambda}{|x|}\big)^{n-\alpha}u\big(\frac{\lambda^2}{|x|^2}x\big)\quad x\neq 0.
\end{equation*}
Then $u_{\lambda}(x) \geq 0$ for $x\neq 0$, and $(-\Delta)^{\alpha / 2} u_{\lambda}(x)=\left(\frac{\lambda}{|x|}\right)^{n+\alpha} f\left(u\left(\frac{\lambda^{2}}{|x|^{2}} x\right)\right)\geq 0$. Denote
\begin{equation*}
    m=\inf_{B_r(0) \backslash B_{\frac{r}{2}}(0)}u_\lambda>0
\end{equation*}
for some fixed $r\in(0,1]$. By \cite[Theorem 1]{LWX}, there exists a positive constant $c=c(n,\alpha)<1$ such that
\begin{equation*}
u_{\lambda}(x) \geq c m,\quad \forall 0<|x|<r.
\end{equation*}
Thus
\begin{equation*}
   u(x) \geq \frac{c m \lambda^{n-\alpha}}{|x|^{n-\alpha}}, \quad|x|>\frac{\lambda^{2}}{r}.
\end{equation*}
One can get the conclusion by setting $\lambda=1$.
\end{proof}

The next lemma is a part of the moving spheres method.
\begin{lemma}\label{step 2.1}
 Assume that $f:(0, +\infty) \rightarrow[0,+\infty)$ is locally bounded, and $\frac{f(t)}{t^{\tau}}$ is 
 monotonically decreasing in $(0, +\infty)$ with $\tau=\frac{n+\alpha}{n-\alpha}$. Let $u \in \mathcal{L}_{\alpha} \cap C_{\text {loc}}^{1,1}\left(\mathbb{R}^{n}\right)$ is a positive solution to
\begin{equation*}
(-\Delta)^{\alpha / 2} u(x)=f(u(x)) \quad x \in \mathbb{R}^{n}.
\end{equation*}
Denote the Kelvin transform of $u$ as
\begin{equation*}
u_{\lambda}(x)=\left(\frac{\lambda}{\left|x-x_{0}\right|}\right)^{n-\alpha} u\left(\frac{\lambda^{2}}{\left|x-x_{0}\right|^{2}}\left(x-x_{0}\right)+x_{0}\right), \quad x \neq x_{0}.
\end{equation*}
If $u(x) \geq u_{\lambda}(x)$ for all $\left|x-x_{0}\right|>\lambda$, 
then either
\begin{equation*}
u(x)>u_{\lambda}(x), \quad \forall  \left|x-x_{0}\right|>\lambda,
\end{equation*}
or else 
\begin{equation*}
u(x) \equiv u_{\lambda}(x), \quad \forall  \left|x-x_{0}\right|>\lambda .
\end{equation*}
\end{lemma}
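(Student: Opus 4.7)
The plan is to work, without loss of generality, with $x_{0}=0$ and $\lambda=1$ (by translation and scaling, which transform the hypothesis on $f$ only up to a positive multiplicative constant that preserves the structural condition), and to introduce the difference
\[
w(x) := u(x) - u_{\lambda}(x).
\]
Using the identity $u_{\lambda}(x/|x|^{2})=|x|^{n-\alpha}u(x)$, the function $w$ satisfies the spherical anti-symmetry $w(x)=-|x|^{-(n-\alpha)}w(x/|x|^{2})$ required by Theorem \ref{MP 1}. Moreover $w\in\mathcal{L}_{\alpha}$ (the $|x|^{-(n-\alpha)}$ singularity of $u_{\lambda}$ at the origin is $\mathcal{L}_{\alpha}$-integrable), and $w$ is $C^{1,1}_{\mathrm{loc}}$ on $\mathbb{R}^{n}\setminus\{0\}$ since Kelvin inversion is a diffeomorphism there. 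The hypothesis $u\geq u_{\lambda}$ on $\{|x|>1\}$ reads $w\geq 0$ there (hence $w\leq 0$ on $\{0<|x|<1\}$ by anti-symmetry).

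I would argue by contradiction: assume $w\not\equiv 0$ on $\{|x|>1\}$ yet $w(\bar{x})=0$ for some $|\bar{x}|>1$. Since $w\geq 0$ outside the unit ball, $\bar{x}$ realizes $\inf_{|x|\geq 1}w(x)=0$, so Theorem \ref{MP 1}(1) yields
\begin{equation*}
(-\Delta)^{\alpha/2}w(\bar{x})\;\leq\;-C\int_{|y|>1}w(y)\Bigg(\frac{1}{|\bar{x}-y|^{n+\alpha}}-\frac{1}{\big|\tfrac{\bar{x}}{|\bar{x}|}-|\bar{x}|y\big|^{n+\alpha}}\Bigg)dy.
\end{equation*}
The kernel in parentheses is strictly positive on $\{|y|>1\}$ by the identity $\big|\tfrac{\bar{x}}{|\bar{x}|}-|\bar{x}|y\big|^{2}-|\bar{x}-y|^{2}=(|\bar{x}|^{2}-1)(|y|^{2}-1)>0$ used in the proof of Theorem \ref{MP 1}. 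Continuity of $w$ on $\mathbb{R}^{n}\setminus\{0\}$ together with $w\geq 0$ and $w\not\equiv 0$ produces an open set on which $w>0$, so the integral is strictly positive and $(-\Delta)^{\alpha/2}w(\bar{x})<0$.

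For the contradicting lower bound I would use the conformal covariance of the fractional Laplacian,
\[
(-\Delta)^{\alpha/2}u_{\lambda}(\bar{x})=|\bar{x}|^{-(n+\alpha)}f\big(u(\bar{x}/|\bar{x}|^{2})\big)=|\bar{x}|^{-(n+\alpha)}f\big(|\bar{x}|^{n-\alpha}u_{\lambda}(\bar{x})\big),
\]
combined with $w(\bar{x})=0$ to set $a:=u(\bar{x})=u_{\lambda}(\bar{x})>0$. Writing $|\bar{x}|^{-(n+\alpha)}=|\bar{x}|^{-(n-\alpha)\tau}$ with $\tau=(n+\alpha)/(n-\alpha)$ and noting $|\bar{x}|^{n-\alpha}a>a$ (since $|\bar{x}|>1$), the structural hypothesis that $f(t)/t^{\tau}$ is decreasing gives
\[
\frac{f(|\bar{x}|^{n-\alpha}a)}{(|\bar{x}|^{n-\alpha}a)^{\tau}}\;\leq\;\frac{f(a)}{a^{\tau}}\;\Longleftrightarrow\;|\bar{x}|^{-(n+\alpha)}f(|\bar{x}|^{n-\alpha}a)\leq f(a).
\]
Hence $(-\Delta)^{\alpha/2}u_{\lambda}(\bar{x})\leq f(u(\bar{x}))$, and subtracting from $(-\Delta)^{\alpha/2}u(\bar{x})=f(u(\bar{x}))$ yields $(-\Delta)^{\alpha/2}w(\bar{x})\geq 0$, contradicting the strict upper bound. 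This forces the dichotomy claimed in the lemma.

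The main subtlety is the interplay between the two sides: Theorem \ref{MP 1}(1) supplies the correct non-positive upper bound because the Kelvin-symmetric kernel difference is exactly tailored to exploit $w\geq 0$ outside and $w\leq 0$ inside, while the non-negative lower bound relies on the critical exponent $\tau$ being precisely the one for which the $|\bar{x}|^{\pm(n+\alpha)}$ weights from conformal covariance cancel. Crucially, no Lipschitz or monotonicity assumption on $f$ is invoked; only the comparison of $f/t^{\tau}$ at the single pair $a<|\bar{x}|^{n-\alpha}a$ is needed, which is exactly what $f(t)/t^{\tau}\searrow$ guarantees.
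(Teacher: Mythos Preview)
Your proof is correct and follows essentially the same route as the paper: both assume a touching point $\bar{x}$ where $w(\bar{x})=0$, obtain $(-\Delta)^{\alpha/2}w(\bar{x})\geq 0$ from the structural condition on $f(t)/t^{\tau}$ via conformal covariance, and obtain $(-\Delta)^{\alpha/2}w(\bar{x})\leq 0$ from Theorem~\ref{MP 1}(1), then force $w\equiv 0$ from the positivity of the kernel difference. The only cosmetic differences are that you normalize $\lambda=1$ by an initial scaling (the paper rescales at the moment of applying Theorem~\ref{MP 1}) and that you phrase the contradiction as a strict inequality $(-\Delta)^{\alpha/2}w(\bar{x})<0$ using $w\not\equiv 0$ directly, whereas the paper first concludes the integral vanishes and then reads off $w\equiv 0$.
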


\begin{proof}
Without loss of generality, we assume $x_0=0$ in the Kelvin transform. Let $w_{\lambda}=u-u_{\lambda}$. Then $w_{\lambda}(x) \geq 0$ for any $|x| \geq \lambda$. If $w_{\lambda} \equiv 0$ for any $|x| \geq \lambda$, then the first statement holds. Next we consider $w_{\lambda} \not \equiv 0$. We will prove that, in this case,
    \begin{equation}\label{eq 11}
w_{\lambda}>0, \quad \forall |x| \geq \lambda
\end{equation}
We prove (\ref{eq 11}) by contradiction. Suppose that there exists $|\bar{x}|>\lambda$ such that
\begin{equation*}
w_{\lambda}(\bar{x})=0.
\end{equation*}
One can see that
\begin{equation}\label{eq 12}
\begin{aligned}
(-\Delta)^{\alpha / 2} w_{\lambda}(\bar{x})
& =f(u(\bar{x}))-\Big(\frac{\lambda}{|\bar{x}|}\Big)^{n+\alpha} f\left(\Big(\frac{|\bar{x}|}{\lambda}\Big)^{n-\alpha} u_{\lambda}(\bar{x})\right) \\
& =f(u(\bar{x}))-\Big(\frac{\lambda}{|\bar{x}|}\Big)^{n+\alpha} f\left(\Big(\frac{|\bar{x}|}{\lambda}\Big)^{n-\alpha} u(\bar{x})\right) \\
&=\frac{f(u(\bar{x}))}{u^{\tau}(\bar{x})} u^{\tau}(\bar{x})-\frac{f\left(\Big(\frac{|\bar{x}|}{\lambda}\Big)^{n-\alpha} u(\bar{x})\right)}{\left(\Big(\frac{|\bar{x}|}{\lambda}\Big)^{n-\alpha} u(\bar{x})\right)^{\tau}} u^{\tau}(\bar{x}) \\
& \geq\frac{f(u(\bar{x}))}{u^{\tau}(\bar{x})} u^{\tau}(\bar{x})-\frac{f(u(\bar{x}))}{u^{\tau}(\bar{x})} u^{\tau}(\bar{x})=0.
\end{aligned}
\end{equation}
Here the last inequality is due to $\left(\frac{|\bar{x}|}{\lambda}\right)^{n-\alpha} u(\bar{x})>u(\bar{x})$ and
the monotonicity of $\frac{f(t)}{t^{\tau}}$. Let $w(x)=w_{\lambda}\left(\lambda x\right)$. Then $w(x) \geq 0$ for any $|x| \geq 1$, and there exists a point (still denote as $\bar{x}$) such that $|\bar{x}|>1$ and $w(\bar{x})=0$. By Theorem \ref{MP 1} and the fact that $\left|\frac{x}{|x|}-|x|y\right|^{2}>|x-y|^{2}$ for $|x|,|y|>1$, we have
\begin{equation}\label{eq 14}
 (-\Delta)^{\alpha/ 2} w(\bar{x}) \leq -\int_{|y|>1} w(y) \Bigg(\frac{1}{|\Bar{x}-y|^{n+\alpha}}-\frac{1}{\left|\frac{\bar{x}}{|\Bar{x}|}-|\Bar{x}|y\right|^{n+\alpha}}\Bigg)dy\leq 0
\end{equation}
Combining (\ref{eq 12}) and (\ref{eq 14}),  we get
\begin{equation*}
\int_{|y|>1} w(y) \Bigg(\frac{1}{|\Bar{x}-y|^{n+\alpha}}-\frac{1}{\left|\frac{\bar{x}}{|\Bar{x}|}-|\Bar{x}|y\right|^{n+\alpha}}\Bigg)dy =0 .
\end{equation*}
Thus
\begin{equation*}
w(y) \equiv 0, \quad|y|>1,
\end{equation*}
which contradicts (\ref{eq 11}).  
\end{proof}

We construct a spherically anti-symmetric auxiliary function, which will be useful to estimate the difference between the solution and its Kelvin transform near infinity.  

\begin{lemma}[Spherically anti-symmetric auxiliary function]\label{aux fun}
    Let $u$ and $u_\lambda$ be defined as in Lemma \ref{step 2.1}. Without loss of generality we assume $x_0=0$ in the Kelvin transform. Let $w_\lambda =u-u_\lambda$ and
    \begin{equation*}
    \lambda_{0} =\sup \{\mu>0\mid u_{\lambda}(x) \leq  u(x), ~~ \forall ~ 0<\lambda<\mu, ~~|x| \geq \lambda \} < \infty.
\end{equation*}
For $\varepsilon>0$ and $R \gg \lambda_{0}+1$, define 
\begin{equation*}
\varphi(x)= \begin{cases}
\varepsilon /|x|^{n-\alpha}, & |x|>R, \\ 
\omega_{\lambda_{0}}(x), & \lambda_{0} \leq|x| \leq R, \\
-\left(\frac{\lambda_{0}}{|x|}\right)^{n-\alpha} \varphi\left(\frac{\lambda_{0}^{2} x}{|x|^{2}}\right), \quad & 0<|x|<\lambda_{0}.
\end{cases}
\end{equation*}
Then for $\varepsilon>0$ sufficiently small, $\varphi$ is upper semicontinuous and satisfies 
\begin{equation*}
(-\Delta)^{\alpha / 2} \varphi(x) \leq 0, \quad x \in B_{2R}^{c}(0).
\end{equation*}
\end{lemma}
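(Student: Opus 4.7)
The proof separates into checking upper semicontinuity and estimating $(-\Delta)^{\alpha/2}\varphi$ on $B_{2R}^{c}(0)$. For semicontinuity, $\varphi$ is smooth away from the three interfaces $\partial B_{\lambda_0}$, $\partial B_R$, and $\partial B_{\lambda_0^2/R}$, so only these spheres matter. At $\partial B_{\lambda_0}$ both $w_{\lambda_0}$ and the Kelvin-extended piece vanish, so $\varphi$ is continuous. At $\partial B_R$, the interior value $w_{\lambda_0}(x)$ admits a uniform positive lower bound on the compact sphere (by Lemma~\ref{step 2.1}, $w_{\lambda_0}>0$ on $\{|x|>\lambda_0\}$), while the exterior limit equals $\varepsilon/R^{n-\alpha}$; choosing $\varepsilon$ small enough makes the interior value dominate, giving USC. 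The sphere $\partial B_{\lambda_0^2/R}$ is handled symmetrically by the anti-symmetric extension built into the definition.

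For the main inequality I would use the decomposition $\varphi = V_1 + V_2$, where $V_1(x):=\varepsilon/|x|^{n-\alpha}$ is extended to all of $\mathbb{R}^n\setminus\{0\}$ and $V_2 := \varphi - V_1$. Since $V_1$ is a constant multiple of the fundamental solution of $(-\Delta)^{\alpha/2}$, we have $(-\Delta)^{\alpha/2}V_1(x) = 0$ for every $x\neq 0$. By construction $V_2$ is supported in $\overline{B_R(0)}$, hence for $|x|>2R$ the kernel is nonsingular and
\[
(-\Delta)^{\alpha/2}\varphi(x) \;=\; (-\Delta)^{\alpha/2}V_2(x) \;=\; -\,C_{n,\alpha}\int_{|y|\leq R}\frac{V_2(y)}{|x-y|^{n+\alpha}}\,dy.
\]
Thus the task reduces to showing $\int V_2(y)/|x-y|^{n+\alpha}\,dy \geq 0$.

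Unpacking the definition, $V_2(y) = w_{\lambda_0}(y) - \varepsilon/|y|^{n-\alpha}$ on $\lambda_0^2/R \leq |y|\leq R$ (using $-(\lambda_0/|y|)^{n-\alpha}w_{\lambda_0}(\lambda_0^2 y/|y|^2)=w_{\lambda_0}(y)$ to extend $w_{\lambda_0}$ inside $B_{\lambda_0}$), and $V_2(y) = -\varepsilon/\lambda_0^{n-\alpha} - \varepsilon/|y|^{n-\alpha}$ on $0<|y|<\lambda_0^2/R$. The principal term $\int_{\lambda_0^2/R \leq |y|\leq R}w_{\lambda_0}(y)/|x-y|^{n+\alpha}\,dy$ rearranges, via the inversion $y\mapsto\lambda_0^2 y/|y|^2$ applied on the inner piece together with $(w_{\lambda_0})_{\lambda_0}=-w_{\lambda_0}$, into
\[
\int_{\lambda_0 \leq |y|\leq R}w_{\lambda_0}(y)\bigg(\frac{1}{|x-y|^{n+\alpha}} - \frac{1}{|\lambda_0 x/|x| - |x|y/\lambda_0|^{n+\alpha}}\bigg)\,dy.
\]
The identity $|\lambda_0 x/|x| - |x|y/\lambda_0|^2 - |x-y|^2 = (|x|^2-\lambda_0^2)(|y|^2-\lambda_0^2)/\lambda_0^2 > 0$ for $|x|,|y|>\lambda_0$ makes the kernel difference strictly positive, and combined with $w_{\lambda_0}>0$ on $\{|y|>\lambda_0\}$ and the size bound $|x-y|\asymp|x|$ on the integration domain for $|x|>2R$, it delivers a positive lower bound of order $c(\lambda_0,R)/|x|^{n+\alpha}$, independent of $\varepsilon$. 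The remaining $\varepsilon$-contributions in $V_2$ are absolutely bounded by $C(\lambda_0,R)\varepsilon/|x|^{n+\alpha}$, so taking $\varepsilon$ small enough (in terms of $\lambda_0$ and $R$) renders the full integral non-negative and finishes the estimate.

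The main technical obstacle is securing the positive lower bound of order $1/|x|^{n+\alpha}$ for the $w_{\lambda_0}$ term uniformly in $|x|>2R$: because $w_{\lambda_0}$ vanishes on $\partial B_{\lambda_0}$, the Kelvin pairing could in principle erode the main contribution, so one must verify that the extra weight $1-\lambda_0^{n+\alpha}/|y|^{n+\alpha}$ from the kernel difference combines with $w_{\lambda_0}$ to give a genuinely positive integral on the annulus $\lambda_0<|y|\leq R$. This positivity is exactly what makes the smallness of $\varepsilon$ effective.
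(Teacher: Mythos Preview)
Your approach is correct and uses the same ingredients as the paper, but with a different organization that is somewhat more streamlined. The paper first exploits the spherical anti-symmetry of $\varphi$ to rewrite $(-\Delta)^{\alpha/2}\varphi(x)$ as an integral over $\{|y|>\lambda_0\}$ against the kernel $K(x,y)=|x-y|^{-(n+\alpha)}-\bigl|\tfrac{\lambda_0}{|x|}x-\tfrac{|x|}{\lambda_0}y\bigr|^{-(n+\alpha)}$ plus a residual term, and only afterwards invokes the identity $(-\Delta)^{\alpha/2}\bigl(\varepsilon|x|^{-(n-\alpha)}\bigr)=0$ to cancel that residual. You reverse the order: subtracting the fundamental solution first makes $V_2=\varphi-V_1$ compactly supported in $\overline{B_R}$, so that for $|x|>2R$ the fractional Laplacian reduces to a single nonsingular integral. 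Your Kelvin pairing on the inner annulus then produces exactly the same key quantity $\int_{\lambda_0<|y|\le R}w_{\lambda_0}(y)K(x,y)\,dy$ that the paper calls term~I. Both routes finish by bounding this from below by $c\,|x|^{-(n+\alpha)}$ using $\min_{\lambda_0+1\le|y|\le R}w_{\lambda_0}>0$ and absorbing the $O(\varepsilon|x|^{-(n+\alpha)})$ remainders; your decomposition simply avoids writing out the intermediate identity (the paper's \eqref{eq 24}).

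One small caveat: your claim that upper semicontinuity at $\partial B_{\lambda_0^2/R}$ is ``handled symmetrically by the anti-symmetric extension'' is not quite right---the Kelvin reflection exchanges upper and lower semicontinuity, so the jump at $\partial B_R$ (where $\varphi$ drops) reflects to a jump at $\partial B_{\lambda_0^2/R}$ where $\varphi$ rises, violating USC there. This does not affect the main estimate or the application in Lemma~\ref{estimates w}, since Theorem~\ref{sym mp} only needs lower semicontinuity of $h=w_{\lambda_0}-\varphi$ on $\overline{\Omega}\subset\{|x|\ge 2R\}$, where $\varphi$ is smooth. The paper's proof likewise does not actually verify global USC.
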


\begin{proof}
By the spherical anti-symmetry, we have that for any $x \in B_{2 R}^{c}$, 
\begin{align}\label{eq 23}
& (-\Delta)^{\alpha / 2} \varphi(x) \\
& =C_{n, \alpha} \int_{|y|>\lambda_{0}} \frac{\varphi(x)-\varphi(y)}{|x-y|^{n+\alpha}}+\frac{\left(\frac{\lambda_{0}}{|y|}\right)^{n-\alpha} \varphi(x)+\varphi(y)}{\left|\frac{\lambda_{0}}{|x|} x-\frac{|x|}{\lambda_{0}} y\right|^{n+\alpha}} d y \nonumber\\
& =C_{n, \alpha} \int_{|y|>\lambda_{0}}  K(x, y) (\varphi(x)-\varphi(y)) d y+\varphi(x) \int_{|y|>\lambda_{0}} \frac{1+\left( \frac{\lambda_{0}}{ |y|}\right)^{n-\alpha}}{\left|\frac{\lambda_{0}}{|x|} x-\frac{|x|}{\lambda_{0}} y\right|^{n+\alpha}} d y \nonumber\\
& =C_{n, \alpha} \int_{\lambda_{0}<|y| \leq R}K(x,y)\left(\frac{\varepsilon}{|x|^{n-\alpha}}-\omega_{\lambda_{0}}(y)\right) d y  + C_{n, \alpha} \int_{|y|>R} K(x,y)\left(\frac{\varepsilon }{|x|^{n-\alpha}}-\frac{\varepsilon }{|y|^{n-\alpha}}\right) d y \nonumber\\
& ~~~~~ + \frac{\varepsilon C_{n, \alpha}}{|x|^{n-\alpha}} \int_{|y|>\lambda_{0}} \frac{1+\left(\frac{\lambda_{0}}{|y|}\right)^{n-\alpha}}{\left|\frac{ \lambda_{0}}{|x|} x-\frac{|x|}{\lambda_{0}} y\right|^{n+\alpha}} d y \nonumber ,
\end{align}
where
\begin{equation*}
 K(x, y)=\frac{1}{|x-y|^{n+\alpha}}-\frac{1}{\left|\frac{\lambda_{0}}{|x|} x-\frac{|x|}{\lambda_{0}} y\right|^{n+\alpha}}.
\end{equation*}
One can see that $K(x, y)>0$ for $|x|,|y|>\lambda_{0}$. On the other hand, since $\frac{\varepsilon}{|x|^{n-\alpha}}$ is a fundamental solution to $(-\Delta)^{\alpha / 2}$, we have

\begin{align*}
0 & =(-\Delta)^{\alpha / 2} \frac{\varepsilon}{|x|^{n-\alpha}}\\
&=\varepsilon C_{n,\alpha}\Bigg(\int_{|y|>\lambda_{0}} \frac{1 /|x|^{n-\alpha}-1 /|y|^{n-\alpha}}{|x-y|^{n+\alpha}} d y+\int_{|y|<\lambda_{0}} \frac{1 /|x|^{n-\alpha}-1 /|y|^{n-\alpha}}{|x-y|^{n+\alpha}} d y\Bigg) \\
& =\varepsilon C_{n,\alpha}\Bigg(\int_{|y|>\lambda_{0}} \frac{1 /|x|^{n-\alpha}-1 /|y|^{n-\alpha}}{|x-y|^{n+\alpha}} d y+\int_{|y|>\lambda_{0}} \frac{\left(\frac{\lambda_{0}}{|x| |y|} \right)^{n-\alpha}-1 / \lambda_{0}^ {n-\alpha}}{\left|\frac{\lambda_{0}}{|x|} x-\frac{|x|}{\lambda_{0}} y\right|^{n+\alpha}} d y\Bigg) \\
& =\varepsilon C_{n, \alpha}\Bigg(\int_{|y|>\lambda_{0}}  K(x, y)\left(\frac{1}{|x|^{n-\alpha}}-\frac{1}{|y|^{n-\alpha}}\right)dy + \frac{1}{|x|^{n-\alpha}} \int_{|y|>\lambda_{0}} \frac{1+\left(\frac{\lambda_0}{|y|}\right)^{n-\alpha}}{\left|\frac{\lambda_{0}}{|x|} x-\frac{|x|}{\lambda_{0}} y\right|^{n+\alpha}} dy \\
& ~~~~~ -\int_{|y|>\lambda_{0}} \frac{\frac{1}{|y|^{n-\alpha}}+\frac{1}{\lambda_0^{n-\alpha}}}{\left|\frac{\lambda_{0}}{|x|} x-\frac{|x|}{\lambda_{0}} y\right|^{n+\alpha}}dy\Bigg).
\end{align*}
Thus we obtain 
\begin{equation}\label{eq 24}
\frac{1}{|x|^{n-\alpha}} \int_{|y|>\lambda_{0}} \frac{1+\left(\frac{\lambda_{0}}{|y|}\right)^{n-\alpha}}{\left|\frac{\lambda_{0}}{|x|} x-\frac{|x|}{\lambda_{0}} y\right|^{n+\alpha}} d y=\int_{|y|>\lambda_{0}} \frac{\frac{1}{|y|^{n-\alpha}}+\frac{1}{\lambda_{0}^{n-\alpha}}}{\left|\frac{\lambda_{0}}{|x|} x-\frac{|x|}{\lambda_{0}} y\right|^{n+\alpha}} d y -\int_{|y|>\lambda_{0}} K(x, y)\left(\frac{1}{|x|^{n-\alpha}}-\frac{1}{|y|^{n-\alpha}}\right) d y .
\end{equation}
Using (\ref{eq 24}) to replace the last term in (\ref{eq 23}), we have
\begin{align*}
&(-\Delta)^{\alpha/ 2} \varphi(x)\\
&=C_{n, \alpha} \int_{\lambda_{0}<|y| \leq R} K(x, y)\left(\frac{\varepsilon}{|x|^{n-\alpha}}-\omega_{\lambda_{0}}(y)\right) d y
+\varepsilon C_{n, \alpha} \int_{|y|>R} K(x, y)\left(\frac{1}{|x|^{n-\alpha}}-\frac{1}{|y|^{n-\alpha}}\right) d y  \\
& ~~~~~ +\varepsilon C_{n, \alpha}\int_{|y|>\lambda_{0}} \frac{\frac{1}{|y|^{n-\alpha}}+\frac{1}{\lambda_0^{n-\alpha}}}{\left|\frac{\lambda_{0}}{|x|} x-\frac{|x|}{\lambda_{0}} y\right|^{n+\alpha}}dy
-\varepsilon C_{n ,\alpha} \int_{|y|>\lambda_{0}} K(x, y)\left(\frac{1}{|x|^{n-\alpha}}-\frac{1}{|y|^{n-\alpha}}\right) d y \\
& =C_{n, \alpha} \int_{\lambda_{0}<|y| \leq R} K(x, y)\left(\frac{\varepsilon}{|y|^{n-\alpha}}-w_{\lambda_{0}}(y)\right) d y
+\varepsilon C_{n, \alpha}\int_{|y|>\lambda_{0}} \frac{\frac{1}{|y|^{n-\alpha}}+\frac{1}{\lambda_0^{n-\alpha}}}{\left|\frac{\lambda_{0}}{|x|} x-\frac{|x|}{\lambda_{0}} y\right|^{n+\alpha}}dy\\
& =: \text{I}+\text{II}.
\end{align*}
First we estimate II: 
\begin{equation*}
\begin{aligned}
& \text { II } \leq  \frac{\varepsilon C_{n, \alpha, \lambda_{0}}}{|x|^{n+\alpha}} \int_{|y|>\lambda_{0}} \frac{d y}{\left|\frac{\lambda_{0}^{2}}{|x|^{2}} x-y\right|^{n+\alpha}}
\leq\frac{ \varepsilon C_{n, \alpha, \lambda_{0}} }{|x|^{n+\alpha}} \int_{|y|>\lambda_{0}} \frac{d y}{|y|^{n+\alpha}}
\leq \frac{\varepsilon C_{n, \alpha, \lambda_{0}}}{|x|^{n+\alpha}}.
\end{aligned}
\end{equation*}
Now we estimate I. By the mean value theorem,
\begin{equation*}
\begin{aligned}
K(x, y) =\frac{C_{n, \alpha}}{\xi^{ \frac{n+2}{2} +1}}\Bigg(\left|\frac{\lambda_{0}}{|x|} x-\frac{|x|}{\lambda_{0}} y\right|^{2}-|x-y|^{2}\Bigg) =\frac{C_{n, \alpha}}{\xi ^{\frac{n+2}{2}+1}} \frac{1}{\lambda_{0}^{2}}\left(|x|^{2}-\lambda_{0}^{2}\right)\left(|y|^{2}-\lambda_{0}^{2}\right),
\end{aligned}
\end{equation*}
where
\begin{equation*}
    |x-y|^{n+\alpha+2}<\xi^{ \frac{n+2}{2} +1}<\left|\frac{\lambda_{0}}{|x|} x-\frac{|x|}{\lambda_{0}} y\right|^{n+\alpha+2}.
\end{equation*}
Denote $M=\min\limits_{\lambda_0+1\leq |y|\leq R}w_{\lambda_0}(y)>0$. Choose a sufficiently small $\varepsilon$ such that $\max\limits_{\lambda_0+1\leq |y|\leq R} \frac{\varepsilon}{|y|^{n-\alpha}}<\frac{M}{2}$. Then
\begin{align*}
\text{I}& =C_{n, \alpha} \bigg(\int_{\lambda_{0}+1 \leq|y| \leq R} + \int_{\lambda_{0} \leq|y|<\lambda_{0}+1} \bigg) K(x, y) \bigg(\frac{\varepsilon}{|y|^{n-\alpha}}-w_{\lambda_{0}}(y)\bigg) d y \\
&\leq-\frac{M}{2} C_{n, \alpha} \int_{\lambda_{0}+1 \leq|y| \leq R} K(x, y) d y+C_{n, \alpha} \int_{\lambda_{0} \leq|y|<\lambda_{0}+1} \frac{1}{|x-y|^{n+2}} \cdot \frac{\varepsilon}{|y|^{n-\alpha}} d y\\
&\leq-M C_{n, \alpha, \lambda_{0}} \int_{\lambda_{0}+1 \leq|y| \leq R} \frac{\left(|x|^{2}-\lambda_{0}^{2}\right)\left(|y|^{2}-\lambda_{0}^{2}\right)}{\left|\frac{\lambda_{0}}{|x|} x-\frac{|x|}{\lambda_{0}} y\right|^{n+\alpha+2}} d y+C_{n, \alpha, \lambda_0} \frac{\varepsilon}{|x|^{n+\alpha}} \int_{\lambda_{0} \leq|y|<\lambda_{0}+1} \frac{d y}{|y|^{n-\alpha}}\\
&\leq-\frac{M C_{n, \alpha, \lambda_{0}}}{|x|^{n+\alpha}} \int_{\lambda_{0}+1 \leq|y| \leq R} \frac{|y|^{2}-\lambda_{0}^{2}}{\left(|y|+\lambda_{0}\right)^{n+\alpha+2}} d y+\frac{\varepsilon C_{n, \alpha, \lambda_{0}}}{|x|^{n+\alpha}} 
\leq-\frac{M C_{n, \alpha,\lambda_0}}{|x|^{n+\alpha}}+\frac{\varepsilon C_{n, \alpha, \lambda_{0}}}{|x|^{n+\alpha}}.
\end{align*}
Hence, we have for $x \in B_{2 R}^{c}$, 
\begin{equation*}
    (-\Delta)^{\alpha/ 2} \varphi(x) \leq \text{I}+\text{II} \leq-\frac{M C_{n, \alpha, \lambda_0}}{|x|^{n+\alpha}}+\frac{\varepsilon C_{n, \alpha,\lambda_{0}}}{|x|^{n+\alpha}},
\end{equation*}
 where $M=\min\limits _{\lambda_{0}+1 \leq|y| \leq R} w_{\lambda_{0}}(y)$.
By choosing $\varepsilon >0 $ small enough, we obtain that 
\begin{equation*}
    (-\Delta)^{\alpha / 2} \varphi(x) \leq 0 \;\;\;\;\text{ for any }x \in B_{2 R}^{c}.
\end{equation*}
The proof of Lemma \ref{aux fun} is finished.  
\end{proof}

Comparing $\varphi$ in Lemma \ref{aux fun} with $w_{\lambda_0}=u - u_{\lambda_0}$, we derive the asymptotic behavior of $w_{\lambda_0}$ at infinity.

\begin{lemma}[Estimate of $w_{\lambda_0}$ at infinity]\label{estimates w}
    Let $u$, $u_\lambda$, $w_\lambda$ and $\lambda_0$ be defined as in Lemma \ref{aux fun}. Without loss of generality we assume $x_0=0$ in the Kelvin transform. Assume that
    \begin{equation*}
        u(x)>u_{\lambda_0}(x) \;\;\;\;\text{ for any }|x|\geq \lambda_0.
    \end{equation*}
Then for $R \gg \max\{\lambda_0+1,2 \lambda_{0}\}$, there exists $\varepsilon>0$ such that
$$
w_{\lambda_{0}}(x) \geq \frac{\varepsilon}{|x|^{n-\alpha}} \;\;\;\;\text{ for any }|x|>2 R.
$$
\end{lemma}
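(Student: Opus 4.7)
The plan is to study the difference $h := w_{\lambda_0} - \varphi$, with the auxiliary function $\varphi$ constructed in Lemma~\ref{aux fun}. Since both $w_{\lambda_0}$ and $\varphi$ are spherically anti-symmetric with respect to $\partial B_{\lambda_0}$, so is $h$. I want to show $h \geq 0$ on $B_{2R}^c$, which is equivalent to the desired lower bound.

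First I would verify the preliminary positivity and decay properties of $h$. By construction, $\varphi \equiv w_{\lambda_0}$ on $\lambda_0 \leq |x| \leq R$, so $h \equiv 0$ there. On the compact annulus $R \leq |x| \leq 2R$, the strict positivity assumption $u > u_{\lambda_0}$ and continuity give $M_1 := \min_{R \leq |x| \leq 2R} w_{\lambda_0}(x) > 0$; choosing $\varepsilon$ with $\varepsilon/R^{n-\alpha} \leq M_1$ then forces $h \geq 0$ on this annulus. At infinity, $\varphi(x) \to 0$ while $u_{\lambda_0}(x) = (\lambda_0/|x|)^{n-\alpha} u(\lambda_0^2 x/|x|^2) \to 0$ (the inverted point approaches the origin, where $u$ is continuous) and $u \geq 0$, hence $\liminf_{|x| \to \infty} h(x) \geq 0$.

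I would then argue by contradiction: suppose $h < 0$ somewhere in $B_{2R}^c$. Since $h$ is lower semicontinuous (from the upper semicontinuity of $\varphi$ in Lemma~\ref{aux fun}) and is nonnegative outside $B_{2R}^c$ while $\liminf_{|x| \to \infty} h \geq 0$, the negative infimum must be attained at some $\bar{x}$ with $|\bar{x}| > 2R$. Applying Theorem~\ref{MP 1}(1) to $h$ (after the natural rescaling sending $\partial B_{\lambda_0}$ to $\partial B_1$), the minimality gives $h(\bar{x}) - h(y) \leq 0$ for all $|y| > \lambda_0$, and together with the strictly negative boundary term proportional to $h(\bar{x})/\bigl((d(\bar{x}))^\alpha (1+d(\bar{x}))^n\bigr)$ we obtain $(-\Delta)^{\alpha/2} h(\bar{x}) < 0$ strictly.

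The main obstacle is to establish the opposing inequality $(-\Delta)^{\alpha/2} h(\bar{x}) \geq 0$. Since Lemma~\ref{aux fun} gives $(-\Delta)^{\alpha/2} \varphi(\bar{x}) \leq 0$, it suffices to show $(-\Delta)^{\alpha/2} w_{\lambda_0}(\bar{x}) \geq 0$. The technical difficulty is that, in contrast to Lemma~\ref{step 2.1} where one has $u(\bar{x}) = u_{\lambda_0}(\bar{x})$ at the critical point, here we only know $u(\bar{x}) > u_{\lambda_0}(\bar{x})$ with no a priori relation to $u(\lambda_0^2 \bar{x}/|\bar{x}|^2)$. The rescue comes from exploiting the minimality: $h(\bar{x}) < 0$ forces $w_{\lambda_0}(\bar{x}) < \varepsilon/|\bar{x}|^{n-\alpha}$, and combined with the elementary upper bound $u_{\lambda_0}(\bar{x}) \leq \bigl(\sup_{B_{\lambda_0}} u\bigr) (\lambda_0/|\bar{x}|)^{n-\alpha}$ this yields $u(\bar{x}) \leq C/|\bar{x}|^{n-\alpha}$ for a constant $C$ independent of $\bar{x}$. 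On the other hand $u(\lambda_0^2 \bar{x}/|\bar{x}|^2) \geq \inf_{B_{\lambda_0/2}} u > 0$ whenever $|\bar{x}| > 2\lambda_0$, so taking $R$ large enough (which is consistent with $R \gg \max\{\lambda_0+1, 2\lambda_0\}$) ensures $u(\bar{x}) \leq s := u(\lambda_0^2 \bar{x}/|\bar{x}|^2)$. Then the monotonicity of $f(t)/t^\tau$ together with the identity $(\lambda_0/|\bar{x}|)^{n+\alpha} f(s) = \bigl(f(s)/s^\tau\bigr) u_{\lambda_0}(\bar{x})^\tau$ yield
\[
(-\Delta)^{\alpha/2} w_{\lambda_0}(\bar{x}) = \frac{f(u(\bar{x}))}{u(\bar{x})^\tau} u(\bar{x})^\tau - \frac{f(s)}{s^\tau} u_{\lambda_0}(\bar{x})^\tau \geq \frac{f(s)}{s^\tau}\bigl(u(\bar{x})^\tau - u_{\lambda_0}(\bar{x})^\tau\bigr) \geq 0,
\]
which contradicts the strict inequality $(-\Delta)^{\alpha/2} h(\bar{x}) < 0$ from the previous paragraph and completes the proof.
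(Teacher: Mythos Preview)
Your argument is correct and reaches the same conclusion, but the route differs from the paper's in one essential point: how you force the monotonicity inequality $u(\bar{x}) \leq s := u(\lambda_0^2\bar{x}/|\bar{x}|^2)$ needed for $(-\Delta)^{\alpha/2} w_{\lambda_0}(\bar{x}) \geq 0$. The paper does this \emph{regionally}: it partitions $\{|x|>R\}$ into $D=\{u_{\lambda_0}<u<2u_{\lambda_0}\}$ and $\tilde{D}=\{u\geq 2u_{\lambda_0}\}$; on $\tilde{D}$ the desired lower bound $w_{\lambda_0}\geq u_{\lambda_0}\geq c|x|^{-(n-\alpha)}$ is immediate, while on $D$ the condition $u<2u_{\lambda_0}$ combined with $(|x|/\lambda_0)^{n-\alpha}>2$ gives $u<s$ automatically, so $(-\Delta)^{\alpha/2}w_{\lambda_0}\geq 0$ holds throughout $D$ and Theorem~\ref{sym mp} applies with $\Omega=\{|x|>2R\}\cap D$. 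You instead work \emph{pointwise}: at the presumed minimum $\bar{x}$ you extract from $h(\bar{x})<0$ the decay $u(\bar{x})\leq C|\bar{x}|^{-(n-\alpha)}$, and then impose an extra (but harmless) largeness requirement on $R$ so that this decay forces $u(\bar{x})\leq s$. Your approach avoids the $D/\tilde{D}$ splitting and invokes Theorem~\ref{MP 1} directly rather than the packaged Theorem~\ref{sym mp}; the paper's approach is slightly more economical in that the superharmonicity of $w_{\lambda_0}$ on $D$ holds without any further constraint on $R$. Both are valid implementations of the same comparison idea.
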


\begin{proof}
Denote 
\[D=\left\{x>R: u_{\lambda_0}(x)<u(x)<2 u_{\lambda_{0}}(x)\right\} 
\]
and 
\[
\tilde{D}=\left\{x>R: u(x) \geq 2 u_{\lambda_{0}}(x)\right\}.
\]
For $x \in D$, one can see that
$$
\begin{aligned}
(-\Delta)^{\alpha / 2} w_{\lambda_{0}}(x) &=f(u(x))-\Big(\frac{\lambda_{0}}{|x|}\Big)^{n+\alpha} f\Big(\big(\frac{|x|}{\lambda_{0}}\big)^{n-\alpha} u_{\lambda_{0}}(x)\Big)  \\
& =\frac{f(u(x))}{u^{\tau}(x)} u^{\tau}(x)-\frac{f\left(\big(\frac{|x|}{\lambda_{0}}\big)^{n-\alpha} u_{\lambda_{0}}(x)\right)}{\left(\big(\frac{| x|}{\lambda_{0}}\big)^{n-\alpha}  u_{\lambda_{0}}(x)\right)^{\tau}} u_{\lambda_{0}}^{\tau}(x) \\
& \geq \frac{f(u(x))}{u^{\tau}(x)}\left(u^{\tau}(x)-u_{\lambda_{0}}^{\tau}(x)\right) \geq 0 .
\end{aligned}
$$
The first inequality is because $\left(\frac{|x|}{\lambda_{0}}\right)^{n-\alpha} u_{\lambda_{0}}(x)>2 u_{\lambda_{0}}(x)>u(x)$ and the monotonicity of $\frac{f(t)}{t^\tau}$. For $x \in \tilde{D}$, it follows from the definition of Kelvin transform that
\begin{equation}\label{eq 28}
 w_{\lambda_0}(x) \geq u_{\lambda_0}(x) \geq \frac{c}{|x|^{n-\alpha}}.
\end{equation}
Let $h(x)=w_{\lambda_0}(x)-\varphi(x)$, where $\varphi(x)$ is given as the auxiliary function in Lemma \ref{aux fun}. For $x \in \{|x|>2R\}\cap D$, we have
\begin{equation}\label{eq 25}
   (-\Delta)^{\alpha/2} h(x)=(-\Delta)^{\alpha/2} \omega_{\lambda_{0}}(x)-(-\Delta)^{\alpha/ 2} \varphi(x) \geq 0 .
\end{equation}
For $x \in B_{\lambda_{0}}^{c}(0) \backslash\left(\{|x|>2R\} \cap D\right)=\left(B_{R}(0) \backslash B_{\lambda_{0}}(0)\right) \cup\left(\overline{B_{2 R}}(0) \backslash B_{R}(0)\right) \cup\left(\{|x|>2R\}\cap \tilde{D}\right)$, we have
$$
\begin{cases}h(x)=w_{\lambda_{0}}(x)-w_{\lambda_{0}}(x)=0, & x \in B_{R} \backslash B_{\lambda_{0}}, \\ h(x)=w_{\lambda_{0}}(x)-\frac{\varepsilon}{|x|^{n-\alpha}}>0, & x \in\left(\overline{B_{2 R}} \backslash B_{R}\right) \cup\left(\{|x|>2R\} \cap \tilde{D}\right).\end{cases}
$$
Thus, 
\begin{equation}\label{eq 26}
    h(x) \geq 0 \quad \text { in }   B_{\lambda_{0}}^{c} \backslash\left(\{|x|>2R\} \cap D\right) .
\end{equation}
We also have
\begin{equation}\label{eq 27}
    \liminf_{|x|\to\infty}h(x)=\liminf_{|x|\to\infty} \left(w_{\lambda_0}(x)-\frac{\varepsilon}{|x|^{n-\alpha}}\right)\geq 0.
\end{equation}
Combining (\ref{eq 25}), (\ref{eq 26}), (\ref{eq 27}) and Theorem \ref{sym mp}, it follows that
\begin{equation}\label{eq 29}
   h(x) \geq 0 \;\;\;\;\text { in } \{|x|>2R\} \cap D .
\end{equation}
It follows from (\ref{eq 28}) and (\ref{eq 29}) that
$$
\omega_{\lambda_{0}}(x) \geq \frac{\varepsilon}{|x|^{n-\alpha}}, \quad \forall |x|>2R .
$$
The proof of Lemma \ref{estimates w} is finished. 
\end{proof}

\section{Direct method of moving spheres}\label{main section} 
In this section, we will apply the maximum principles and a priori estimates developed in Section \ref{pre} to prove Theorem \ref{main thm}. The main ingredient in the proof of Theorem \ref{main thm} is the following result via the method of moving spheres. 
\begin{proposition}[Direct method of moving spheres]\label{moving sphere 12}
Let u be a positive solution of \eqref{main eq} and let $u_{\lambda}$ be the Kelvin transform of $u$, i.e.,
\begin{equation}\label{kelvin}
  u_\lambda(x)=\left(\frac{\lambda}{|x-x_0|}\right)^{n-\alpha}u\left(\frac{\lambda^2}{|x-x_0|^2}(x-x_0)+x_0\right),\quad x\neq x_0.
\end{equation}
If all the assumptions in Theorem \ref{main thm} hold, then precisely one of the following statements holds.  
  \begin{itemize}
      \item[(A)] For every ${x}_0 \in \mathbb{R}^{n}$ and for all $\lambda \in(0,+\infty)$, it holds that $u_{\lambda}(x) \leq  u(x)$ for any $|x-x_0| \geq \lambda$.

      \item[(B)] For every ${x}_0 \in \mathbb{R}^{n}$, there exists $\lambda_{x_0} \in(0,+\infty)$ depending on $x_0$ such that $u_{\lambda_{x_0}}(x) \equiv u(x)$ for any $|x-x_0| \geq \lambda_{x_0}$.
  \end{itemize}
\end{proposition}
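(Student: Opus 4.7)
The plan is to fix $x_0 \in \mathbb{R}^n$ (and assume $x_0 = 0$ without loss of generality) and study the critical radius
\[
\lambda_0 := \sup\bigl\{\mu > 0 : u_\lambda(x) \leq u(x) \text{ for all } 0 < \lambda < \mu \text{ and } |x| \geq \lambda\bigr\}.
\]
If $\lambda_0 = +\infty$, alternative (A) holds; if $\lambda_0 < +\infty$, I will show that $u_{\lambda_0} \equiv u$ on $\{|x| \geq \lambda_0\}$, giving (B) with $\lambda_{x_0} = \lambda_0$. This breaks into three stages: start the procedure ($\lambda_0 > 0$), invoke the dichotomy of Lemma \ref{step 2.1} at $\lambda_0$, and exclude the strict inequality case by contradicting the maximality of $\lambda_0$.

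For the start, I would combine the decay bound $u_\lambda(x) \leq \lambda^{n-\alpha}(\sup_{B_\lambda}u)|x|^{-(n-\alpha)}$ (trivially valid since $|\lambda^2 x/|x|^2| \leq \lambda$ on $\{|x|\geq\lambda\}$) with the lower bound $u(x) \geq C|x|^{-(n-\alpha)}$ from Lemma \ref{u at infty} to force $u_\lambda \leq u$ on $\{|x|\geq 1\}$ once $\lambda$ is sufficiently small. On the remaining compact annulus $\{\lambda\leq|x|\leq 1\}$, if $w_\lambda := u - u_\lambda$ attained a negative infimum, the rescaled function $\tilde{w}(y):=w_\lambda(\lambda y)$ would be spherically anti-symmetric and Theorem \ref{MP 1}(1), paired with the lower bound on $(-\Delta)^{\alpha/2}w_\lambda$ at the infimum derived below, would rule this out for small $\lambda$. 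For the second stage, assume $\lambda_0 < +\infty$. Continuity in $\lambda$ gives $u_{\lambda_0} \leq u$ on $\{|x|\geq\lambda_0\}$, and Lemma \ref{step 2.1} leaves exactly two options: either $u_{\lambda_0} \equiv u$ (alternative (B)), or $u_{\lambda_0} < u$ strictly on $\{|x| > \lambda_0\}$. In the strict case, Lemma \ref{estimates w} produces $\varepsilon > 0$ and $R \gg \lambda_0$ with $w_{\lambda_0}(x) \geq \varepsilon|x|^{-(n-\alpha)}$ on $\{|x| > 2R\}$.

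The main obstacle is excluding this strict case. Continuous dependence of $u_\lambda$ on $\lambda$ transfers the lower bound to $w_{\lambda_k} \geq (\varepsilon/2)|x|^{-(n-\alpha)}$ on $\{|x| > 2R\}$ for any sequence $\lambda_k \searrow \lambda_0$. Maximality of $\lambda_0$ supplies $\bar{x}_k$ with $|\bar{x}_k|\geq\lambda_k$ at which $w_{\lambda_k}$ attains a negative infimum (the infimum is attained because $w_{\lambda_k}(x)\to 0$ at infinity). The transferred estimate confines $\bar{x}_k$ to a fixed compact annulus, so along a subsequence $\bar{x}_k \to \bar{x}$; strict positivity of $w_{\lambda_0}$ off the sphere forces $|\bar{x}| = \lambda_0$, hence $d(\bar{y}_k)\to 0$ for $\bar{y}_k := \bar{x}_k/\lambda_k$.

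To close, rescale $v_k(y) := w_{\lambda_k}(\lambda_k y)$; this is spherically anti-symmetric (as the Kelvin structure is preserved under the dilation) with infimum at $\bar{y}_k$. Since the integral term in Theorem \ref{MP 1}(1) is non-positive at the infimum,
\[
(-\Delta)^{\alpha/2} v_k(\bar{y}_k) \leq C\,\frac{v_k(\bar{y}_k)}{d(\bar{y}_k)^\alpha(1+d(\bar{y}_k))^n}.
\]
On the other hand, the equation together with the monotonicity of $f(t)/t^\tau$ (exactly as in the proof of Lemma \ref{step 2.1}) gives
\[
(-\Delta)^{\alpha/2} w_{\lambda_k}(\bar{x}_k) \geq \frac{f(u(\bar{x}_k))}{u(\bar{x}_k)^\tau}\bigl(u(\bar{x}_k)^\tau - u_{\lambda_k}(\bar{x}_k)^\tau\bigr),
\]
and the mean value theorem applied to $t\mapsto t^\tau$ between $u(\bar{x}_k)$ and $u_{\lambda_k}(\bar{x}_k)$, combined with the fact that $u$ is bounded above and below on the compact annulus (so $f(u)/u^\tau$ and $u_{\lambda_k}^{\tau-1}$ are uniformly bounded), yields the reverse-direction bound
\[
(-\Delta)^{\alpha/2} v_k(\bar{y}_k) \geq -C\,|v_k(\bar{y}_k)|
\]
uniformly in $k$. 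Since $v_k(\bar{y}_k)<0$, dividing both inequalities by it flips the sign and produces $d(\bar{y}_k)^\alpha(1+d(\bar{y}_k))^n \geq \delta > 0$, contradicting $d(\bar{y}_k)\to 0$. This rules out the strict case and establishes the proposition.
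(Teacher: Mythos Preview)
Your overall architecture matches the paper's: start the sphere, define $\lambda_0$, invoke Lemma \ref{step 2.1}, and in the strict case use Lemma \ref{estimates w} to trap the minimizers $\bar{x}_k$, then Theorem \ref{MP 1} to force $|\bar{x}_k|$ away from $\lambda_0$. That core contradiction argument is correct and essentially the paper's. However, two genuine pieces are missing.

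\textbf{The global dichotomy (paper's Claim 2).} You fix a single center $x_0$ and prove that \emph{for that center} either $\lambda_0=+\infty$ or $u_{\lambda_0}\equiv u$. But statements (A) and (B) are quantified over \emph{every} $x_0\in\mathbb{R}^n$. Showing $\lambda_0(0)=+\infty$ gives the comparison only for the center $0$, not alternative (A); likewise $\lambda_0(0)<\infty$ gives (B) only at $0$. The paper closes this with a separate argument: if $\lambda_0(0)=+\infty$ then $|x|^{n-\alpha}u(x)\to+\infty$, while if $\lambda_{x_0}<\infty$ for some other center then $u\equiv u_{\lambda_{x_0}}$ forces $|x|^{n-\alpha}u(x)$ to have a finite limit --- a contradiction. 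Without this step your proof does not establish the proposition as stated.

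\textbf{The starting step.} Your plan to handle the annulus $\{\lambda\le|x|\le 1\}$ by Theorem \ref{MP 1} and the equation's lower bound does not produce a contradiction for small $\lambda$. After rescaling, the maximum principle gives $(-\Delta)^{\alpha/2}\tilde w(\bar y)\le C\,\tilde w(\bar y)\,d(\bar y)^{-\alpha}(1+d(\bar y))^{-n}$, while the equation gives $(-\Delta)^{\alpha/2}\tilde w(\bar y)\ge C_0\lambda^\alpha\,\tilde w(\bar y)$. Dividing by $\tilde w(\bar y)<0$ yields $d(\bar y)^\alpha(1+d(\bar y))^n\ge C/(C_0\lambda^\alpha)$, but since $\bar y$ ranges over $1<|\bar y|<1/\lambda$ the left side can be as large as $\sim\lambda^{-(n+\alpha)}$, so no contradiction follows. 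The region is not narrow in any sense as $\lambda\to 0$. The paper instead uses a direct $C^{1,1}$ argument: $\nabla_x\big(|x|^{(n-\alpha)/2}u(x)\big)\cdot x>0$ for $0<|x|<r_0$, which immediately gives $u_\lambda<u$ on $\{0<\lambda<|x|<r_0\}$, and then Lemma \ref{u at infty} handles $\{|x|\ge r_0\}$.
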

\begin{proof}[Proof of Proposition \ref{moving sphere 12}]
The proof contains two steps.

Step 1. We start by considering the center $x_0$ of Kelvin transform to be 0, i.e.,
\begin{equation*}
    u_\lambda(x)=\left(\frac{\lambda}{|x|}\right)^{n-\alpha}u\left(\frac{\lambda^2}{|x|^2}x\right),\quad x\neq 0.
\end{equation*}
We will prove that there exists small $\bar{\lambda} >0$ such that $u_{\lambda}(x) \leq  u(x)$ for all $0<\lambda<\bar{\lambda}$ and $|x| \geq \lambda$. 

Since $u$ positive and $u\in C^{1,1}_{loc}(\mathbb{R}^n)$, there exists $r_{0}>0$ such that
\begin{equation*}
    \nabla_{x}\left(|x|^{\frac{n-\alpha}{2}} u(x)\right) \cdot x>0, \quad \forall 0<|x|<r_{0}.
\end{equation*}
Consequently,
\begin{equation}\label{eq 33}
u_{\lambda}(x)<u(x), \quad \forall 0<\lambda<|x|<r_{0}.
\end{equation}
By Lemma \ref{u at infty}, we know 
\begin{equation*}
    u(x) \geq \frac{1}{c\left(r_{0}\right)|x|^{n-\alpha}}, \quad \forall|x| \geq r_{0}.
\end{equation*}
So there exists small $\bar{\lambda}  \in\left(0, r_{0}\right)$ such that
\begin{equation}\label{eq 34}
 u_{\lambda}(x) \leq  u(x),\quad \forall|x| \geq r_{0}, \quad \forall 0<\lambda< \bar{\lambda}.    
\end{equation}
Combining (\ref{eq 33}) and (\ref{eq 34}), we get Step 1. 

Step 2. 
Define
\begin{equation*}
    \lambda_{0}=\sup \{\mu>0\mid u_{\lambda}(x) \leq  u(x), ~~ \forall 0<\lambda<\mu, ~~  |x| \geq \lambda \}.
\end{equation*}
By Step 1, we have $0< \lambda_{0} \leqslant \infty$. Now we give the following two claims.
\begin{enumerate}
    \item [Claim 1.] If $\lambda_0<\infty$, then $u_{\lambda_0}(x)=u(x)$ for every $|x|\geq \lambda_0$. 
    \item [Claim 2.] If $\lambda_0=+\infty$, then for every center $x_0\in\mathbb{R}^n$ of the Kelvin transform, the corresponding $\lambda_{x_0}=+\infty$.  
\end{enumerate} 
One can see that Claim 2 implies that if $\lambda_0$ is finite, then for every $x_0\in\mathbb{R}^n$ the corresponding $\lambda_{x_0}$ is finite.
If Claim 1 and Claim 2 hold, then the statements (A) and (B) in Proposition \ref{moving sphere 12} can be reduced to the following statement: either $\lambda_{0}$ is infinite or $\lambda_{0}$ is finite. 
The reduced statement is always true. Therefore, in Step 2 it suffices to give proofs of Claim 1 and Claim 2.

First, we prove Claim 1. By the definition of $\lambda_0$, we have
\begin{equation*}
    u_{\lambda_0}(x) \leq  u(x), \quad \forall|x| \geq \lambda_{0}.
\end{equation*}
Lemma \ref{step 2.1} implies that one of the following statements holds: 
either 
\begin{equation}\label{eq 1}
u(x)>u_{\lambda_0}(x), \quad \forall \left|x\right|>\lambda_0,
\end{equation}
or else 
\begin{equation}\label{eq 13}
u(x) \equiv u_{\lambda_0}(x), \quad \forall \left|x\right|>\lambda_0 .
\end{equation}
If (\ref{eq 13}) holds, then Claim 1 is true. Therefore, we only need to rule out (\ref{eq 1}), which completes the proof of the claim. Let $w(x)=u(x)-u_\lambda(x)$ for $x\neq 0$. The idea is to illustrate that if (\ref{eq 1}) holds, then there exists $\delta>0$ such that
\begin{equation}\label{eq 4}
w_{\lambda}(x) \geq 0, \quad \forall \lambda \in\left[\lambda_{0}, \lambda_{0}+\delta\right], \quad \forall|x| \geq \lambda .
\end{equation}
That is, the sphere $\partial B_{\lambda_0}(0)$ can be moved further, which contradicts the supremum definition of $\lambda_{0}$. We will prove (\ref{eq 4}) by contradiction.

Assume that (\ref{eq 4}) does not hold. Let $\delta_{k}=\frac{1}{k}$  for any $k \in N_{+}$. Since $\liminf\limits_{|x|\to\infty}w_\lambda(x)\geq 0$, one can see that there exist $\lambda_{k} \in\left[\lambda_{0}, \lambda_{0}+\delta_{k}\right]$ and $\left|x_{k}\right|>\lambda_{k}$ such that
\begin{equation}\label{eq 31}
w_{\lambda_k}\left(x_{k}\right)=\min _{\substack{\lambda_{0} \leq  \lambda \leq  \lambda_{0}+\delta_{k} \\|x|  \geq \lambda}} w_{\lambda}(x)<0.
\end{equation}
We then show that $\left\{x_{k}\right\}$ is a bounded sequence. For each $\lambda_{k} \in\left[\lambda_{0}, \lambda_{0}+\delta_{k}\right]$ and $|x|>2 R$, it follows from Lemma \ref{estimates w} and definition of Kelvin transform that
\begin{equation*}
\begin{aligned}
w_{\lambda_{k}}(x) & =u(x)-u_{\lambda_{0}}(x)+u_{\lambda_{0}}(x)-u_{\lambda_{k}}(x) 
\geq \frac{\varepsilon}{|x|^{n-\alpha}}+c \frac{\lambda_{0}^{n-\alpha}-\lambda_{k}^{n-\alpha}}{|x|^{n-\alpha}} .
\end{aligned}
\end{equation*}
For sufficiently large $k$, it holds that
\begin{equation*}
0>c\left(\lambda_{0}^{n-\alpha}-\lambda_{k}^{n-\alpha}\right)>-\frac{\varepsilon}{2} .
\end{equation*}
Thus
\begin{equation*}
\omega_{\lambda_{k}}(x) \geq \frac{\varepsilon / 2}{|x|^{n-\alpha}}>0 \;\;\;\;\text { for large } k \text { and }|x|>2 R \text {. }
\end{equation*}
This implies $\left\{x_{k}\right\}$ is a bounded sequence. Suppose its convergent subsequence (still denote as $\{x_{k}\}$) converges to $\bar{x}$. Next we show that 
\begin{equation}\label{eq 30}
|\bar{x}|\neq \lambda_0.    
\end{equation}
If $|\bar{x}|=\lambda_0$, then without loss of generality, we assume $|x_k|<3\lambda_k$ for any $k$.
Denote $d_{k}=\operatorname{dist}\left({x}_{k}, \partial B_{\lambda_{k}}(0)\right)$. 
By Theorem \ref{MP 1} and scaling process, it follows that
\begin{equation}\label{eq 8}
\begin{aligned} (-\Delta)^{\alpha / 2} w_{\lambda_{k}}\left(x_{k}\right) \leq  C_{n,\alpha} \frac{w_{\lambda_{k}}\left(x_{k}\right)}{d_{k}^{\alpha}} \leq  C_{n,\alpha} w_{\lambda_{k}}\left(x_{k}\right) \frac{1}{\left|x_{k}\right|^{\alpha}}. \end{aligned}    
\end{equation}
On the other hand, one can see that
\begin{equation*}
\begin{aligned}
(-\Delta)^{\frac{\alpha}{2}} w_{\lambda_{k}}\left(x_{k}\right)=
\frac{f\left(u\left(x_{k}\right)\right)}{u^{\tau}\left(x_{k}\right)} u^{\tau}\left(x_{k}\right) -\frac{f\left(\big(\frac{|x_{k}|}{\lambda_k}\big)^{n-\alpha} u_{\lambda_{k}}\left(x_{k}\right)\right)}{\left(\big(\frac{|x_{k}|}{\lambda_k}\big)^{n-\alpha} u_{\lambda_{k}}\left(x_{k}\right)\right)^{\tau}} u^{\tau}_{\lambda_{k}}\left(x_{k}\right).
\end{aligned}
\end{equation*}
Since $u\left(x_{k}\right)<u_{\lambda_{k}}\left(x_{k}\right)<\left(\frac{\left|x_{k}\right|}{\lambda_k}\right)^{n-\alpha} u_{\lambda_{k}}\left(x_{k}\right)$, we have
\begin{equation}\label{eq 19}
\begin{aligned}
(-\Delta)^{\frac{\alpha}{2}} w_{\lambda_{k}}\left(x_{k}\right) 
&\geq\frac{f\left(\big(\frac{\left|x_{k}\right|}{\lambda_k}\big)^{n-\alpha} u_{\lambda_{k}}\left(x_{k}\right)\right)}{\left(\big(\frac{\left|x_{k}\right|}{\lambda_k}\big)^{n-\alpha} u_{\lambda_{k}}\left(x_{k}\right)\right)^{\tau}} (u^{\tau}\left(x_{k}\right)-u^{\tau}_{\lambda_{k}}\left(x_{k}\right)) \\
& \geq \tau \frac{f\left(\big(\frac{\left|x_{k}\right|}{\lambda_k}\big)^{n-\alpha} u_{\lambda_{k}}\left(x_{k}\right)\right)}{\left(\big(\frac{\left|x_{k}\right|}{\lambda_k}\big)^{n-\alpha} u_{\lambda_{k}}\left(x_{k}\right)\right)^{\tau}}u_{\lambda_{k}}^{\tau-1}\left(x_{k}\right) w_{\lambda_{k}}\left(x_{k}\right) \\
& =\tau\left(\frac{\lambda_k}{\left|x_{k}\right|}\right)^{2 \alpha} \frac{f\left(u\left(\frac{\lambda_{k}^{2} x_{k}}{\left|x_{k}\right|^{2}}\right)\right)}{u\left(\frac{\lambda_{k}^{2} x_{k}}{\left|x_{k}\right|^{2}}\right)} w_{\lambda_{k}}\left(x_{k}\right) 
\geq\frac{C_{1}}{\left|x_{k}\right|^{2 \alpha}} w_{\lambda_{k}}\left(x_{k}\right)
\end{aligned}
\end{equation}
where $C_1>0$ is dependent of $\lambda_0,\alpha,n,f$ and independent of $k$. The second inequality is due to the mean value theorem, and the last inequality is due to the local boundedness of $f$. 
Combining (\ref{eq 8}) and (\ref{eq 19}), it follows that
\begin{equation*}
\begin{aligned}
\frac{C_{1}}{\left|x_{k}\right|^{2 \alpha}} w_{\lambda_k}\left(x_{k}\right) \leq C_{n,\alpha} \frac{1}{d_{k}^{\alpha}} w_{\lambda_k}\left(x_{k}\right)  \leq C_{n,\alpha} \frac{1}{\left|x_{k}\right|^{\alpha}} w_{\lambda_k}\left(x_{k}\right) .
\end{aligned}
\end{equation*}
Note that $w_{\lambda_k}\left(x_{k}\right) <0$. This implies that
\begin{equation}\label{eq 9}
\begin{aligned}
C_{1}\lambda_0^{2\alpha} \leq  {C_{1}}\left|x_{k}\right|^{2 \alpha} \leq  d_{k}^{\alpha} \leq  C_{n,\alpha}\left|x_{k}\right|^{\alpha} \leq C_{n,\alpha}\left|\lambda_0+1\right|^{\alpha}, 
\end{aligned}
\end{equation}
where $C_{1}$ is a positive constants independent of $k$. If $|\bar{x}|=\lambda_{0}$, then $\lim\limits _{k \rightarrow \infty} d_{k}=0$, which contradicts (\ref{eq 9}). Thus (\ref{eq 30}) holds.

If $\lambda_{0}<|\bar{x}| <\infty$, then $w_{\lambda_{0}}(\bar{x})=\lim\limits_{k\to\infty}w_{\lambda_{k}}\left(x_{k}\right)\leq  0$, which contradicts (\ref{eq 1}). This implies that (\ref{eq 31}) cannot be true. Thus, (\ref{eq 4}) holds, and the sphere $\partial B_{\lambda_0}(0)$ can be moved further. But this contradicts the supremum definition of $\lambda_{0}$. Therefore, (\ref{eq 1}) is ruled out, and the proof of Claim 1 is completed.

Now we prove Claim 2. The proof comes from Li-Zhang \cite[Lemma 2.3]{LyZh}. For the convenience of the readers we outline it here. Since $\lambda_0=+\infty$, it follows that
\begin{equation*}
    |x|^{n-\alpha}u(x)\geq \lambda^{n-\alpha} u\left(\frac{\lambda^2}{|x|^2}x\right),\quad \forall\lambda>0,\, \forall |x|\geq \lambda.
\end{equation*}
Thus
\begin{equation*}
    \lim_{|x|\to\infty}  |x|^{n-\alpha}u(x)=+\infty.
\end{equation*}
On the other hand, if $\lambda_{x_0}<\infty$ for some $x_0$, then by Claim 1 we know that $u(x)\equiv u_{\lambda_{x_0}}(x)$. Therefore,
\begin{equation*}
    \lim_{|x|\to\infty}  |x|^{n-\alpha}u(x)=\lim_{|x|\to\infty}  |x|^{n-\alpha}u_{\lambda_{x_0}}(x) =\lim_{|x|\to\infty} \lambda_{x_0}^{n-\alpha}u\left(\frac{\lambda_{x_0}^2}{|x-x_0|^2}(x-x_0)+x_0\right)<\infty.
\end{equation*}
Contradiction. Thus, the proof of Proposition \ref{moving sphere 12} is finished.  
\end{proof}

Now we proceed to prove Theorem \ref{main thm}. The proof is a combination of Proposition \ref{moving sphere 12} and the following result.  
\begin{proposition}[Li-Zhu \cite{LyZ}]\label{prop}
Let $u \in C^{1,1}_{loc}\left(\mathbb{R}^n\right)$ and let $u_\lambda$ be its Kelvin transform. Then we have the following:
\begin{enumerate}
    \item[(C)] If (A) holds, then $u$ is constant.
    \item[(D)] If (B) holds, then there exist $\beta_1>0$ and $\beta_2>0$ such that
\begin{equation}\label{eq 18}
u(x)=\frac{\beta_1}{\left(\left|x-x_0\right|^2+\beta_2^2\right)^{\frac{n-\alpha}{2}}}, \quad\forall x \in \mathbb{R}^n .
\end{equation}
\end{enumerate}
\end{proposition}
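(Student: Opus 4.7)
My plan is as follows. For (C), a short geometric argument using the Kelvin inequality in (A) and a limit of centers going to infinity suffices. For (D), the so-called Li-Zhu lemma, I would reduce the equality $u \equiv u_{x_0, \lambda_{x_0}}$ to a functional equation for $\phi := u^{-2/(n-\alpha)}$ and then show that any positive $C^{1,1}_{loc}$ solution of that equation is a positive-definite quadratic polynomial.

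For (C), I would take any two distinct points $p, q \in \mathbb{R}^n$ and place $x_0$ on the line through them so that the points are ordered $x_0, q, p$. Setting $\lambda := \sqrt{|p - x_0|\,|q - x_0|} \in \bigl(|q - x_0|, |p - x_0|\bigr)$, the Kelvin reflection centered at $x_0$ with radius $\lambda$ sends $p$ to $q$. Applying (A) at $x = p$ gives
\[
\left(\frac{|q - x_0|}{|p - x_0|}\right)^{(n-\alpha)/2} u(q) \;\leq\; u(p).
\]
Pushing $|x_0| \to \infty$ along this line, $|q - x_0|/|p - x_0| \to 1$, so $u(q) \leq u(p)$. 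Placing $x_0$ on the opposite side of the segment swaps the roles of $p$ and $q$ and yields $u(p) \leq u(q)$. Hence $u(p) = u(q)$, so $u$ is constant.

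For (D), I would first note that the identity $u \equiv u_{x_0, \lambda_{x_0}}$ on $\{|x - x_0| \geq \lambda_{x_0}\}$ extends automatically to all $x \neq x_0$ because the Kelvin transform is an involution. Setting $\phi := u^{-2/(n-\alpha)}$, the equality becomes
\[
\phi(x) \;=\; \frac{|x - x_0|^2}{\lambda_{x_0}^2}\, \phi\!\left(x_0 + \frac{\lambda_{x_0}^2\,(x - x_0)}{|x - x_0|^2}\right), \qquad x \neq x_0.
\]
A direct calculation shows that every positive quadratic $\phi(x) = a|x - \bar x|^2 + c$ with $a, c > 0$ satisfies this equation for every $x_0$, with $\lambda_{x_0}^2 = \phi(x_0)/a$; the goal is the converse. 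Letting $|x| \to \infty$ in the displayed equation shows that $A := \lim_{|x|\to\infty}|x|^{n-\alpha} u(x) = \lambda_{x_0}^{n-\alpha} u(x_0)$ exists and is a positive constant independent of $x_0$. Consequently $\phi(x)/|x|^2 \to a := A^{-2/(n-\alpha)}$ and $\lambda_{x_0}^2 = \phi(x_0)/a$, so $\lambda_{x_0}$ inherits the regularity of $\phi$ as a function of $x_0$. Differentiating the functional equation in $x_0$ and combining with the asymptotic expansion of $\phi$ at infinity then forces the second-order partials of $\phi$ to be constant, so $\phi$ is a quadratic polynomial whose quadratic part is $a|x|^2$. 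Positivity and completion of the square pin down $\phi = a|x - \bar x|^2 + c$ with $c > 0$, whence $u(x) = (a|x - \bar x|^2 + c)^{-(n-\alpha)/2}$, giving the stated form with $\beta_1 = a^{-(n-\alpha)/2}$ and $\beta_2 = \sqrt{c/a}$.

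The hardest part will be the differentiation step in (D): because $x_0$ appears both as an additive shift and inside a nonlinear composition via $\lambda_{x_0}$, extracting pointwise identities strong enough to conclude that $D^2 \phi$ is constant requires careful bookkeeping of the expansions of $\phi$ near $x_0$ and at infinity. A more conceptual alternative is to transfer $u$ to $S^n$ via stereographic projection: the Kelvin invariance at every $x_0 \in \mathbb{R}^n$, together with the obvious translation and scaling invariances of the problem, generates the full Möbius group of $S^n$, and rigidity of conformal densities under this group action forces $u$ to be the standard bubble up to a Möbius transformation.
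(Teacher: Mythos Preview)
Your argument for (C) is correct but genuinely different from the paper's. The paper fixes a point $z$, introduces the one-parameter family $h(t)=t^{-(n-\alpha)}u\!\big((z-y)/t+y\big)-u\!\big(t(z-y)+y\big)$ along the ray from $y$ through $z$, and uses $h(1)=0$, $h(t)\le 0$ for $t>1$ to get $h'(1)\le 0$; dividing by $|z-y|$ and sending $|y|\to\infty$ yields $\langle\nabla u(z),\nu\rangle\ge 0$ for every direction $\nu$, hence $\nabla u\equiv 0$. Your direct comparison (choosing $\lambda$ so that the Kelvin reflection carries $p$ to $q$, then sending the center to infinity) avoids differentiation altogether and works under mere continuity of $u$; the paper's route, on the other hand, uses only a single center $y$ at a time and differentiability of $u$.

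For (D) your outline is on the right track and coincides in spirit with the paper's proof, but the step you flag as ``hardest'' is precisely where the paper is more concrete and you are vague. Rather than differentiating the functional equation in $x_0$, the paper compares two Taylor expansions of $u$ at infinity: writing $u_{x_0,\lambda_{x_0}}(x)$ for the centers $0$ and $x_0$ and expanding the inner $u$ to first order, one gets two expressions for $|x|^{n-\alpha}u(x)$ whose $O(1/|x|)$ coefficients must agree. This immediately yields, with $A:=\lim_{|x|\to\infty}|x|^{n-\alpha}u(x)$ normalized to $1$,
\[
u^{-\frac{n-\alpha+2}{n-\alpha}}(x_0)\,\partial_i u(x_0)
= u^{-\frac{n-\alpha+2}{n-\alpha}}(0)\,\partial_i u(0)-(n-\alpha)(x_0)_i,
\]
which is exactly $\nabla\phi(x_0)=\nabla\phi(0)+2x_0$ for $\phi=u^{-2/(n-\alpha)}$. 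So $\phi$ is quadratic with leading part $|x|^2$, and completing the square finishes. Your stereographic alternative is a legitimate conceptual shortcut, but note that it still requires checking that the pointwise Kelvin identities at \emph{every} center really generate the full M\"obius group action on the conformal density; this is standard but not content-free.
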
 
For the convenience of readers, we outline the proof of Proposition \ref{prop} here.

\begin{proof}
First, we prove (C). For $y \in \mathbb{R}^n$, let
\begin{equation*}
\lambda = |z-y|, \quad x = \frac{|x-y|(z-y)}{\lambda}+y,
\end{equation*}
and
\begin{equation*}
t= \frac{|x-y|}{\lambda} .
\end{equation*}
Then consider the Kelvin transform about the center $y \in \mathbb{R}^n$,
\begin{equation*}
\begin{aligned}
w_\lambda(x) & =u(x)-u_\lambda(x) \\
& =u(t(z-y)+y)- \left(\frac{1}{t}\right)^{n-\alpha} u\left(\frac{z-y}{t}+y\right).
\end{aligned}
\end{equation*}
Denote $h(t)=\left(\frac{1}{t}\right)^{n-\alpha} u\left(\frac{z-y}{t}+y\right)-u(t(z-y)+y)$. From (A) we can see that $h(1)=0$ and $h(t)<0$ for every $t>1$, and thus
\begin{equation*}
0 \geq\left.\frac{d}{d t}\right|_{t=1} h(t)=-(n-\alpha) u(z)-2\langle\nabla u(z), z-y\rangle .
\end{equation*}
Hence, for every $y \in \mathbb{R}^n$ with $|z-y| \neq 0$, we have that
\begin{equation*}
\frac{-(n-\alpha) u(z)-2\langle\nabla u(z), z-y\rangle}{|z-y|} \leq  0 .
\end{equation*}
Let $\nu = \frac{z-y}{|z-y|}$, we have that $\langle\nabla u(z), \nu\rangle \geq 0$ by letting $|y| \rightarrow+\infty$. Since $y \in \mathbb{R}^n$ is arbitrary, $\nabla u(z)=0$. This completes the proof of (C).

Then we prove (D). Suppose that $x_0$ is the center of the Kelvin transform and the corresponding sphere for $x_0$ is $\lambda_{x_0}$, that is,
\begin{equation*}
    \left(\frac{\lambda_{x_0}}{|x-x_0|}\right)^{n-\alpha}u\left(\frac{\lambda_{x_0}^2}{|x-x_0|^2}(x-x_0)+x_0\right)\equiv u(x) \quad \text{for any }|x-x_0|\geq \lambda_{x_0}. 
\end{equation*}
Furthermore, suppose that the corresponding sphere for center $0$ is $\lambda_0$, that is,
\begin{equation*}
    \left(\frac{\lambda_0}{|x|}\right)^{n-\alpha}u\left(\frac{\lambda_0^2}{|x|^2}x\right)\equiv u(x) \quad \text{for any }|x|\geq \lambda_0. 
\end{equation*}
Then
\begin{equation}\label{eq 15}
A:=\lim _{\left|x\right| \rightarrow \infty}\left|x\right|^{n-\alpha} u\left(x\right)=\lambda_{x_0}^{n-\alpha} u(x_0), \quad \forall x_0 \in \mathbb{R}^{n} .
\end{equation}
If $A=0$, by considering the arbitrariness of $x_0$, one can observe that $u \equiv 0$. Next, we consider $A >0$. Without loss of generality we assume that $A=1$. For $x$ large,
\begin{equation}\label{eq 16}
u\left(x\right)=\frac{\lambda_0^{n-\alpha}}{\left|x\right|^{n-\alpha}}\left\{u(0)+\frac{\partial u}{\partial x_i}(0) \cdot \frac{\lambda_0^2 x_i}{\left|x\right|^2}+o\left(\frac{1}{\left|x\right|}\right)\right\},
\end{equation}
and
\begin{equation}\label{eq 17}
u\left(x\right)=\frac{\lambda_{x_0}^{n-\alpha}}{\left|x-x_0\right|^{n-\alpha}}\left\{u(x_0)+\frac{\partial u}{\partial x_i}(x_0) \cdot \frac{\lambda_{x_0}^2\left(x_i-(x_0)_i\right)}{\left|x-x_0\right|^2}+o\left(\frac{1}{\left|x\right|}\right)\right\} .
\end{equation}
Combining (\ref{eq 15}), (\ref{eq 16}), (\ref{eq 17}), and our assumption $A=1$, we have
\begin{equation*}
u^{-\frac{n-\alpha+2}{n-\alpha}}(x_0) \cdot \frac{\partial u}{\partial x_i}(x_0)=u^{-\frac{n-\alpha+2}{n-\alpha}}(0) \cdot \frac{\partial u}{\partial x_i}(0)-(n-\alpha) (x_0)_i .
\end{equation*}
It follows that for some $x_0^{\prime} \in \mathbb{R}^{n}$, $\beta_1>0$ and $\beta_2>0$,
\begin{equation*}
u^{-\frac{2}{n-\alpha}}\left(x\right)=\frac{\left|x-x_0^{\prime}\right|^2+\beta_2^2}{\beta_1} .
\end{equation*}
This completes the proof. 
\end{proof}

Finally, we prove Theorem \ref{main thm}.
\begin{proof}[Proof of Theorem \ref{main thm}]
By Propositions \ref{moving sphere 12} and \ref{prop}, if $u \in \mathcal{L}_\alpha \cap C^{1,1}_{loc}\left(\mathbb{R}^n\right)$ is a positive solution to (\ref{main eq}), then either $(C)$ or $(D)$ holds. If $(C)$ holds, there exists $C_0 \geq 0$ such that $u \equiv C_0$ and thus $f \left(C_0\right)=(-\Delta)^{\alpha / 2} C_0=0$. If $(D)$ is true, then (\ref{eq 18}) is a solution to
\begin{equation*}
(-\Delta)^{\alpha / 2} u=f(u).
\end{equation*}
By directly calculating $(-\Delta)^{\alpha / 2} u$, the function $f$ satisfies that for some constant $C >0$, 
\begin{equation*}
f(t)=C \cdot t^{\frac{n+\alpha}{n-\alpha}},  \quad  \forall t \in\left(0, \max _{x \in \mathbb{R}^n} u(x)\right] .
\end{equation*}
This completes the proof of Theorem \ref{main thm}. 
\end{proof}

\section{Applications}\label{application}
In this section, we use the direct method of moving spheres and the local blow up argument to prove Theorem \ref{half thm}. First, we establish the monotonicity of solutions via the direct moving spheres.

\begin{lemma}\label{half lemma}
    If $u\in \mathcal{L}_\alpha\cap C_{loc}^{1,1}(\mathbb{R}^n_+)\cap C_{loc}(\overline{\mathbb{R}^n_+})$ is a positive solution to (\ref{half eq-In}), then $u$ is monotonically increasing in the $x_n$-direction for $x_n>0$.
\end{lemma}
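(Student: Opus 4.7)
The plan is to adapt the direct method of moving spheres developed in Section \ref{main section} to the half-space setting, choosing the centers of the Kelvin transform on the boundary hyperplane $\partial\mathbb{R}^n_+$. For $x_0=(x_0',0)\in\partial\mathbb{R}^n_+$ and $\lambda>0$, define
\[
u_{x_0,\lambda}(x)=\Big(\frac{\lambda}{|x-x_0|}\Big)^{n-\alpha}u\Big(x_0+\frac{\lambda^2(x-x_0)}{|x-x_0|^2}\Big),\quad w_{x_0,\lambda}:=u-u_{x_0,\lambda}.
\]
The key geometric observation is that the Kelvin reflection about $\partial B_\lambda(x_0)$ maps $\overline{\mathbb{R}^n_+}\setminus\{x_0\}$ onto itself, and the Dirichlet condition $u\equiv 0$ on $\partial\mathbb{R}^n_+$ is preserved by the transform. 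Consequently the comparison takes place on $\{|x-x_0|\ge\lambda\}\cap\mathbb{R}^n_+$, with $w_{x_0,\lambda}$ vanishing on both parts of its boundary.

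First I would initiate the procedure following Step 1 of Proposition \ref{moving sphere 12}: for each fixed $x_0\in\partial\mathbb{R}^n_+$, the $C^{1,1}_{\mathrm{loc}}$ regularity and positivity of $u$ in $\mathbb{R}^n_+$, its continuity up to the boundary, and a decay estimate at infinity (the half-space analog of Lemma \ref{u at infty}) yield $\bar\lambda(x_0)>0$ such that $w_{x_0,\lambda}\ge 0$ on the outer region for all $0<\lambda<\bar\lambda(x_0)$. I would then define
\[
\lambda_{x_0}:=\sup\{\mu>0:w_{x_0,\lambda}\ge 0\text{ in }\{|x-x_0|\ge\lambda\}\cap\mathbb{R}^n_+,\ \forall\,0<\lambda<\mu\}
\]
and claim $\lambda_{x_0}=+\infty$. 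The dichotomy argument of Claim 1 in Proposition \ref{moving sphere 12}, based on Theorems \ref{MP 1} and \ref{sym mp}, either produces strict positivity (allowing a further slide and contradicting maximality) or forces $w_{x_0,\lambda_{x_0}}\equiv 0$, which in turn gives the conformal identity
\[
u^p(x)=\Big(\frac{\lambda_{x_0}}{|x-x_0|}\Big)^{n+\alpha-p(n-\alpha)}u^p(x)\quad\text{on }\{|x-x_0|>\lambda_{x_0}\}\cap\mathbb{R}^n_+.
\]
This is impossible for subcritical $p<(n+\alpha)/(n-\alpha)$ (the multiplier is not identically $1$), and in the critical case $p=(n+\alpha)/(n-\alpha)$ it forces $u$ to coincide with an inversion-symmetric bubble, contradicting the vanishing of $u$ on $\partial\mathbb{R}^n_+$.

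Having obtained $u\ge u_{x_0,\lambda}$ for every $x_0\in\partial\mathbb{R}^n_+$ and $\lambda>0$, the monotonicity in $x_n$ follows from a calculus argument analogous to Proposition \ref{prop}(C). The function $\lambda\mapsto\lambda^{n-\alpha}u(y_{x_0}(\lambda))$, where $y_{x_0}(\lambda)=x_0+\lambda^2(x-x_0)/|x-x_0|^2$, attains its maximum on $(0,|x-x_0|]$ at the endpoint $\lambda=|x-x_0|$, so
\[
(n-\alpha)u(x)+2\nabla u(x)\cdot(x-x_0)\ge 0\quad\text{for all }x\in\mathbb{R}^n_+\text{ and all }x_0\in\partial\mathbb{R}^n_+.
\]
Sending the tangential component $x_0'$ to infinity along every direction $v\in\mathbb{R}^{n-1}$ forces $\nabla_{x'}u(x)\equiv 0$, so $u(x)=v(x_n)$ depends only on $x_n$. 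Integrating out the tangential variables shows $(-\Delta)^{\alpha/2}u$ equals a positive multiple of the one-dimensional fractional Laplacian of $v$, so $v$ satisfies $(-\Delta_\mathbb{R})^{\alpha/2}v=c_{n,\alpha}v^p$ on $(0,\infty)$ with $v\equiv 0$ on $(-\infty,0]$. A direct moving planes argument (equivalently, one-dimensional moving spheres obtained from the analogs of the machinery in Section \ref{pre}) applied to $v$ then yields the desired monotonicity.

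The main obstacle I expect is Step 2: ruling out the equality case $w_{x_0,\lambda_{x_0}}\equiv 0$ in the critical regime $p=(n+\alpha)/(n-\alpha)$, where the conformal symmetry is finely balanced and one must carefully exploit the Dirichlet vanishing on $\partial\mathbb{R}^n_+$ to exclude the existence of a bubble vanishing on a hyperplane; the subcritical regime is easier because the homogeneity mismatch is quantitative.
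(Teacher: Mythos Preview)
Your route differs from the paper's in a way that introduces real gaps. The paper does \emph{not} center the Kelvin transform on $\partial\mathbb{R}^n_+$; it takes $x_R=(0,\dots,0,-R)$ strictly \emph{below} the half-space and compares on the \emph{inner} region $B_\lambda(x_R)\cap\mathbb{R}^n_+$ for $\lambda>R$. That single change resolves exactly the three places where your argument is incomplete:
\begin{itemize}
\item \textbf{Initiation.} Your appeal to Step~1 of Proposition~\ref{moving sphere 12} does not transfer: that step needs $u(x_0)>0$ and $C^{1,1}$ regularity at the center, both of which fail for $x_0\in\partial\mathbb{R}^n_+$ (where $u(x_0)=0$ and you only have $C_{loc}(\overline{\mathbb{R}^n_+})$). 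In the paper, for $\lambda$ just above $R$ the region $B_\lambda(x_R)\cap\mathbb{R}^n_+$ is a thin cap, and Theorem~\ref{MP 1}(2) gives the narrow-region start directly.
\item \textbf{Excluding the equality case.} With your boundary centers the reflection preserves $\partial\mathbb{R}^n_+$, so $w_{x_0,\lambda}\equiv 0$ there and no easy contradiction is available. Your claim that a \emph{single} identity $u\equiv u_{x_0,\lambda_{x_0}}$ forces $u$ to be a bubble is not justified; one Kelvin symmetry does not determine the solution. In the paper's setup this step is a one-liner: for $y\in B_{\bar\lambda}(x_R)\cap\partial\mathbb{R}^n_+$ the reflected point lies in $\mathbb{R}^n_+$ (because the center is below), so $w_{\bar\lambda}(y)=u_{x_R,\bar\lambda}(y)>0$, contradicting $w_{\bar\lambda}\equiv 0$.
\item \textbf{Extracting monotonicity.} Your calculus argument correctly yields $\nabla_{x'}u=0$, but the remaining inequality $(n-\alpha)v(x_n)+2x_n v'(x_n)\ge 0$ does \emph{not} give $v'\ge 0$; you then defer to an unproved ``1D moving planes'' step. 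The paper avoids any dimension reduction: once $u_{x_R,\lambda}\ge u$ holds on $B_\lambda(x_R)\cap\mathbb{R}^n_+$ for all $\lambda>R$, sending $R\to\infty$ with $\lambda=R+a$ turns the sphere reflection into the planar reflection $x_n\mapsto 2a-x_n$ and gives $u(x',s)\le u(x',t)$ for $0<s<t$ immediately.
\end{itemize}
In short, centering on $\partial\mathbb{R}^n_+$ makes the configuration conformally self-similar and destroys precisely the asymmetry you need; moving the center below the boundary restores it and makes each of your problematic steps trivial.
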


Before giving the proof of Lemma \ref{half lemma}, we apply Lemma \ref{half lemma} to prove Theorem \ref{half thm}.

\begin{proof}[Proof of Theorem \ref{half thm}] 
Assume that $u$ is a positive solution. We will prove that $u$ is bounded in $\overline{\mathbb{R}^n_+}$. Suppose by contradiction that $u$ is not bounded. By the monotonicity of $u$ in the
$x_{n}$-direction, we may assume that there exists $\left\{\bar{x}_{i}\right\} \subset \mathbb{R}^n_+$ such that $\lim\limits_{i\to\infty} |\bar{x}_{i,n}|=\infty$ and $\lim\limits_{i\to\infty} u(\bar{x}_{i})=\infty$. Consider 
\begin{equation*}
    v_{i}(x)=\left(1- |x-\bar{x}_{i}|\right)^{\frac{\alpha}{p-1}}u(x),\;\;\;\; \left|x-\bar{x}_{i}\right| \leq 1.
\end{equation*}
Since $v_i(x)=0$ on $\partial B_{1}(\bar{x}_i)$, there exists $x_{i} \in B_{1}(\bar{x}_i)$ s.t
\begin{equation*}
    v_i(x_i)=\max_{B_{1}(\bar{x}_i)}v_i(x).
\end{equation*}
Let $\sigma_i=\frac{1}{2}(1- |x_i-\bar{x}_{i}|)>0$. Then
\begin{equation*}
(2 \sigma)^{\frac{\alpha}{p-1}} u\left(x_{i}\right)=v_{i}(x_i)\geq v_{i}(\bar{x}_i)=u(\bar{x}_i) \rightarrow \infty.    
\end{equation*}
It follows that $R_i=\sigma_iu^\frac{p-1}{\alpha}(x_i)\to\infty.$ Since
\begin{equation*}
    v_i(x_i)\geq v_i(x)=(2 \sigma)^{\frac{\alpha}{p-1}} u\left(x\right)\geq \sigma^{\frac{\alpha}{p-1}}u(x),\;\;\;\;x\in B_{\sigma_i}(x_i),
\end{equation*}
we see that 
\begin{equation*}
    u(x) \leq 2^{\frac{\alpha}{p-1}} u(x_i),\;\;\;\; x \in B_{\sigma_i}(x_i).
\end{equation*}
Consider 
\begin{equation*}
    w_{i}(y)=\frac{1}{u(x_i)} u\left(x_i+\frac{y}{u(x_i)^{\frac{\alpha}{p-1}}}\right), \;\;\;\; \forall y \in \mathbb{R}^{n}.
\end{equation*}
Then $w_{i}(0)=1$, $w_i(y)\leq 2^{\frac{\alpha}{p-1}}$ for all $|y|<R_i$, and $w_{i}$ satisfies the equation
\begin{equation*}
    (-\Delta)^{\alpha/2}w_i(y)=w_i^p(y),\quad \forall|y|<R_i.
\end{equation*}
By the local Schauder estimates in \cite{CLWX,JLX}, there exists $\varepsilon_0>0$ such that
\begin{equation*}
    \|w_i\|_{C^{\alpha+\varepsilon}(B_{R_i/4})} \leq C(n,\alpha,p),\quad \text{for all $i$ and $\varepsilon\in(0,\varepsilon_0)$}.
\end{equation*}
After passing to a subsequence, we have
\begin{equation*}
    w_i\to w_\infty \quad \text{in }C_{loc}^{\alpha+\varepsilon}(\mathbb{R}^n).
\end{equation*}
Then by \cite[Theorem 1.1]{DJXY}, we have
\begin{equation*}
    (-\Delta)^{\alpha/2}w_\infty=w_\infty^p+b\quad\text{in }\mathbb{R}^n\text{ for some }b\geq 0.
\end{equation*}
By Theorem \ref{main thm}, either $w_\infty\equiv C_0$ for some constant $C_0\geq 0$ satisfying $C_0^p+b=0$, or there exists $C>0$ such that $t^p+b=Ct^\tau$. Then either $w_\infty\equiv 0$ or 
\begin{equation*}
    w_\infty(x)=\frac{\beta_1}{\left(\left|x-x_0\right|^2+\beta_2^2\right)^{\frac{n-\alpha}{2}}}, \quad \forall x \in \mathbb{R}^n,
\end{equation*}
for some $\beta_1, \beta_2>0$ and $x_0 \in \mathbb{R}^n$. The first case contradicts the fact that $w_{\infty}(0)=1$, and the second case contradicts the monotonicity of $w_{\infty}$ in the $x_{n}$-direction. Thus $u$ must be bounded.

Denote $\left(x_{1}, \cdots, x_{n-1}, x_{n}\right)=\left(x^{\prime}, x_{n}\right)$. Let $\{r_{j}\} \subset \mathbb{R}^+$ such that $r_{j}<r_{j+1} \xrightarrow{j \rightarrow \infty}+\infty$. Let $u_{j}\left(x^{\prime}, x_{n}\right)=u\left(x^{\prime}, x_{n}+r_{j}\right)$. Then $u_{j}$ is increasing in the $x_n$-direction and uniformly bounded. By the local Schauder estimates in \cite{CLWX,JLX}, for some $\varepsilon_0>0$ and any compact $\Omega\subset\mathbb{R}^n$ we have 
\begin{equation*}
    \|u_j\|_{C^{\alpha+\varepsilon}(\Omega)}\leq C(n,\alpha,p),\quad \text{for large $j$ and $\varepsilon\in(0,\varepsilon_0)$}.
\end{equation*}
After passing to a subsequence, there exists a positive function $u_\infty$ such that 
\begin{equation*}
   u_j\to u_\infty\quad\text{in }C_{loc}^{\alpha+\varepsilon}\left(\mathbb{R}^{n}\right).
\end{equation*}
It follows from \cite[Theorem 1.1]{DJXY} that $u_\infty$ satisfies 
\begin{equation*}
    (-\Delta)^{\alpha/2}u_\infty=u_\infty^p+b\quad\text{in }\mathbb{R}^n\text{ for some }b\geq 0.
\end{equation*}
Using Theorem \ref{main thm} again, we deduce that 
\begin{equation*}
    u_\infty(x)=\frac{\beta_1}{\left(\left|x-x_0\right|^2+\beta_2^2\right)^{\frac{n-\alpha}{2}}}, \quad \forall x \in \mathbb{R}^n,
     \end{equation*}
for some $\beta_1, \beta_2>0$ and $x_0 \in \mathbb{R}^n$. By Lemma \ref{half lemma}, we get a contradiction. Hence, the proof of Theorem \ref{half thm} is finished. 
\end{proof}

Finally, we give the proof of Lemma \ref{half lemma}.
\begin{proof}[Proof of Lemma \ref{half lemma}]
Define $x_R=(0,\cdots,0,-R)$. Let
\begin{equation*}
    u_{x_R,\lambda}(y)=\left( \frac{\lambda}{|y-x_R|}\right)^{n-\alpha} u\left( \frac{\lambda^2}{|y-x_R|^2}(y-x_R)+x_R\right)
\end{equation*}
be the Kelvin transform of $u$ with respect to the ball $B_\lambda(x_R)$ with center $x_R$ and radius $\lambda>R$. By direct computations, we have
\begin{equation*}
    \begin{cases}
        (-\Delta)^{\alpha/2}u_{x_R,\lambda}(x)=\left( \frac{\lambda}{|x-x_R|}\right)^{n+\alpha-p(n-\alpha)} u_{x_R,\lambda}^p(x),\;\;\;\; &x\in \Omega,\\
        u_{x_R,\lambda}(x)\equiv 0, \;\;\;\; &x\in \Omega^c.
    \end{cases}
\end{equation*}
We want to show that $u_{x_R,\lambda}(x) \geq u(x)$ for any $x \in B_{\lambda}\left(x_{R}\right) \cap \mathbb{R}^n_+$ and $\lambda>R$.

Step 1. We will show that there exists $\lambda_0(R)\in(R,2R)$ such that $u_{x_R,\lambda}(x) \geq u(x)$ for any $x \in B_{\lambda}\left(x_{R}\right) \cap \mathbb{R}^n_+$ and $\lambda\in(R,\lambda_0(R))$.

Let $w_\lambda=u_{x_R,\lambda}-u$. Denote $\Sigma_\lambda^-=\{ x \in B_{\lambda}\left(x_{R}\right) \cap \mathbb{R}^n_+ \mid w_\lambda<0\}$. Then for any $x\in \Sigma_\lambda^-$, we have
\begin{equation*}
\begin{aligned}
    (-\Delta)^{\alpha/2}w_\lambda(x)&=\left( \frac{\lambda}{|x-x_R|}\right)^{n+\alpha-p(n-\alpha)} u_{x_R,\lambda}^p(x) -u^p(x) \\
    & \geq u_{x_R,\lambda}^p(x) -u^p(x) \\
    &=p\xi^{p-1}(u_{x_R,\lambda}(x) -u(x))\\
    &\geq pu^{p-1}(u_{x_R,\lambda}(x) -u(x)),
\end{aligned}
\end{equation*}
where $u_{x_R,\lambda}\leq\xi\leq u$. Thus, $w_\lambda$ satisfies 
\begin{equation}\label{Euq=063}
    \begin{cases}
          (-\Delta)^{\alpha/2}w_\lambda +(-pu^{p-1})w_\lambda\geq 0 \quad &\text{in }\Sigma_\lambda^-, \\
          w_\lambda\geq 0\quad &\text{in }(B_{\lambda}\left(x_{R}\right) \cap \mathbb{R}^n_+)\backslash \Sigma_\lambda^-,\\
          w_{\lambda}=u_{x_R,\lambda} -u=u_{x_R,\lambda} \geq 0\quad &\text{in } B_{\lambda}\left(x_{R}\right) \cap (\mathbb{R}^n_+)^c,\\
          \left(w_{\lambda}\right)_{x_R, \lambda}=-w_{\lambda}\quad &\text{in } B_{\lambda}\left(x_{R}\right) \setminus \{x_R\}.
    \end{cases}
\end{equation}  

{\it Claim:} There exists $\delta>0$ such that $\Sigma_\lambda^-=\emptyset$ for all $R<\lambda<R+\delta$.  
If not, then there exist $\lambda_k \to R$ ($\lambda_k >R$) and $x_k \in \Sigma_{\lambda_k}^-$ satisfying  
$$
w_{\lambda_k}(x_k) = \min_{x \in \Sigma_{\lambda_k}^-} w_{\lambda_k}(x) = \min_{x \in B_{\lambda_k}\left(x_{R}\right) \setminus \{x_R\}} w_{\lambda_k}(x)  <0. 
$$
It follows from Theorem \ref{MP 1} (2) and the scaling argument that 
\[
(-\Delta)^{\alpha/2} w_{\lambda_k}(x_k) \leq C \frac{ w_{\lambda_k}(x_k)}{(d(x_k))^\alpha}, 
\]
where $d(x_k)=\operatorname{dist}\left({x}_{k}, \partial B_{\lambda_{k}}(x_R)\right) \to 0$ as $k \to \infty$. Hence, by the uniform boundedness of $pu^{p-1}$ in $\Sigma_\lambda^-$, 
\[
 (-\Delta)^{\alpha/2}w_{\lambda_k}(x_k)+(-pu^{p-1}(x_k)) w_{\lambda_k}(x_k) \leq w_{\lambda_k}(x_k) \left( \frac{C}{(d(x_k))^\alpha} + (-pu^{p-1}(x_k))  \right) <0 
\]
for all $k$ sufficiently large. This contradicts \eqref{Euq=063}, and so the Claim is true. Thus, Step 1 is established.

Define 
\[
\bar{\lambda}(R)=\sup \{\mu
\mid \mu>R ~ \text{and } w_{\lambda} \geq 0 \quad\text{in } B_{\lambda}\left(x_{R}\right) \cap \mathbb{R}^n_+, ~~  \forall R<\lambda<\mu\}.
\]
By Step 1, $\bar{\lambda}(R)$ is well-defined and $R < \bar{\lambda}(R) \leq \infty$. 

Step 2. We will show that $\bar{\lambda}(R)=\infty$ for all $R>0$.

Suppose by contradiction that $\bar{\lambda}=\bar{\lambda}(R)<\infty$ for some $R>0$. Then $$w_{\bar{\lambda}}\geq 0\quad \text{in } B_{\bar{\lambda}}\left(x_{R}\right) \cap \mathbb{R}^n_+.$$ We will show that $w_{\bar{\lambda}}>0$ in $B_{\bar{\lambda}}\left(x_{R}\right) \cap \mathbb{R}^n_+.$ If the strict positivity does not hold, then there exists $x^{\prime} \in B_{\bar{\lambda}}\left(x_{R}\right) \cap \mathbb{R}^n_+$ such that $w_{\bar{\lambda}}(x')=0$. By the calculation in Step 1, we have
\begin{equation*}
    \begin{cases}
          (-\Delta)^{\alpha/2}w_{\bar{\lambda}}\geq u_{x_R,\bar{\lambda}}^p-u^p\geq 0 \quad &\text{in } B_{\bar{\lambda}}\left(x_{R}\right) \cap \mathbb{R}^n_+,\\
        w_{\bar{\lambda}}\geq 0\quad &\text{in }B_{\bar{\lambda}}\left(x_{R}\right) \cap (\mathbb{R}^n_+)^c,\\
          \left(w_{\bar{\lambda}}\right)_{x_R, \bar{\lambda}}=-w_{\bar{\lambda}}\quad &\text{in } B_{\bar{\lambda}}\left(x_{R}\right) \setminus \{x_R\}.
    \end{cases}
\end{equation*}
By the maximum principle, we have $w_{\bar{\lambda}}=0$ in $\mathbb{R}^{n}$. However, for $y\in B_{\bar{\lambda}}(x_{R})\cap \partial\mathbb{R}^n_+$, we have
\begin{equation*}
    w_{\bar{\lambda}} (y)=u_{x_R, \bar{\lambda}}(y)-u(y)=u_{x_R, \bar{\lambda}}(y)>0,
\end{equation*}
which is a contradiction. Thus 
\begin{equation}\label{eq 32}
    w_{\bar{\lambda}}>0 \quad\text{in } B_{\bar{\lambda}}\left(x_{R}\right) \cap \mathbb{R}^n_+. 
\end{equation}
Hence, for any small $\delta >0$, we have
$$ 0<m_{0}=\min_{B_{\bar{\lambda}-\delta}(x_{R})\cap \mathbb{R}^n_+} w_{\bar{\lambda}}.$$
By the continuity of $w_\lambda$ with respect to $\lambda$, there exists some $0<\delta_1 \ll \delta$ such that for all $\lambda \in[\bar{\lambda}, \bar{\lambda}+\delta_1]$,  we have 
$$w_{\lambda} \geq \frac{m_{0}}{2}>0 \quad\text{in } B_{\bar{\lambda}-\delta}(x_{R})\cap \mathbb{R}^n_+.$$ 
Applying the similar computations as in Step 1 and Theorem \ref{MP 1} (2), we can obtain that
\begin{equation*}
    w_\lambda>0\quad\text{in } B_{\lambda}(x_{R})\cap \mathbb{R}^n_+,\quad\forall \lambda \in[\bar{\lambda}, \bar{\lambda}+\delta_1].
\end{equation*}
This contradicts the definition of $\bar{\lambda}$. Step 2 is established.

By Step 2, we have
\begin{equation*}
    u_{x_R,R+a}(x)\geq u(x),\quad\forall x\in B_{R+a}(x_R)\cap \mathbb{R}^n_+,\quad \forall R,a>0.
\end{equation*}
It follow that, for every $x\in\mathbb{R}^n_+$ and every $a>x_n$,
\begin{equation*}
    u(x)\leq \lim_{R\to\infty}u_{x_R,R+a}(x)=u(x_1,\cdots,x_{n-1},2a-x_n).
\end{equation*}
This implies that 
\begin{equation*}
    u(x_1,\cdots,x_{n-1},s)\leq u(x_1,\cdots,x_{n-1},t),\quad \forall 0<s<t.
\end{equation*}
Thus $u$ is monotonically increasing in $x_n$-direction for $x_n>0$. Lemma \ref{half lemma} is proved. 
\end{proof}

%\noindent COI: The authors declared that they had no conflict of interest.

\end{document}